\newtheorem{theorem}{Theorem}[section]
\newtheorem{corollary}[theorem]{Corollary}
\newtheorem{lemma}[theorem]{Lemma}
\newtheorem{proposition}[theorem]{Proposition}
\theoremstyle{definition}
\newtheorem{definition}[theorem]{Definition}
\newtheorem{example}[theorem]{Example}
\theoremstyle{remark}
\newtheorem{remark}[theorem]{Remark}
\numberwithin{equation}{section}
\renewcommand{\p@enumii}{}
\newcommand{\RR}{\mathbb{R}}
\newcommand{\bB}{\mathbf{B}}
\DeclareMathOperator{\diam}{diam}
\def\<#1>{\langle #1 \rangle}
\newcommand{\acr}{\newline\indent}
\title{Ultrametrics and complete multipartite graphs}
\author{Viktoriia Bilet}
\address{Viktoriia Bilet\acr
Institute of Applied Mathematics and Mechanics of NASU\acr
Dobrovolskogo str. 1, Slovyansk 84100, Ukraine}
\email{viktoriiabilet@gmail.com}
\author{Oleksiy Dovgoshey}
\address{Oleksiy Dovgoshey\acr
Institute of Applied Mathematics and Mechanics of NASU\acr
Dobrovolskogo str. 1, Slovyansk 84100, Ukraine}
\email{oleksiy.dovgoshey@gmail.com}
\author{Yuriy Kononov}
\address{Yuriy Kononov\acr
Institute of Applied Mathematics and Mechanics of NASU\acr
Dobrovolskogo str. 1, Slovyansk 84100, Ukraine}
\email{kononov.yuriy.nikitovich@gmail.com}
\subjclass[2020]{54E35, 54E45}
\keywords{Totally bounded ultrametric space, compact ultrametric space, complete multipartite graph, weak similarity}
\begin{document}

\begin{abstract}
We describe the class of graphs for which all metric spaces with diametrical graphs belonging to this class are ultrametric. It is shown that a metric space \((X, d)\) is ultrametric iff the diametrical graph of the metric \(d_{\varepsilon}(x, y) = \max\{d(x, y), \varepsilon\}\) is either empty or complete multipartite for every \(\varepsilon > 0\). A refinement of the last result is obtained for totally bounded spaces. Moreover, using complete multipartite graphs we characterize the compact ultrametrizable topological spaces. The bounded ultrametric spaces, which are weakly similar to unbounded ones, are also characterized via complete multipartite graphs.
\end{abstract}

\maketitle

\section{Introduction}

In what follows we write \(\RR^{+}\) for the set of all nonnegative real numbers.

\begin{definition}\label{d1.1}
A \textit{semimetric} on a set \(X\) is a function \(d\colon X\times X\rightarrow \RR^{+}\) satisfying the following conditions for all \(x\), \(y \in X\):
\begin{enumerate}
\item \label{d1.1:s1} \((d(x,y) = 0) \Leftrightarrow (x=y)\);
\item \label{d1.1:s2} \(d(x,y)=d(y,x)\).
\end{enumerate}
A semimetric space is a pair \((X, d)\) of a set \(X\) and a semimetric \(d\colon X\times X\rightarrow \RR^{+}\). A semimetric \(d\colon X\times X\rightarrow \RR^{+}\) is called a \emph{metric} if the \emph{triangle inequality}
\[
d(x, y)\leq d(x, z) + d(z, y)
\]
holds \(x\), \(y\), \(z \in X\). A metric \(d\colon X\times X\rightarrow \RR^{+}\) is an \emph{ultrametric} on \(X\) if we have
\begin{equation}\label{d1.1:e3}
d(x,y) \leq \max \{d(x,z),d(z,y)\}
\end{equation}
for all \(x\), \(y\), \(z \in X\). Inequality~\eqref{d1.1:e3} is often called the \emph{strong triangle inequality}. 
\end{definition}

In all ultrametric spaces, each triangle is isosceles with the base being no greater than the legs. The converse statement also is valid: ``If \(X\) is a semimetric space and each triangle in \(X\) is isosceles with the base no greater than the legs, then \(X\) is an ultrametric space.''.

The ultrametric spaces are connected with various of investigations in mathematics, physics, linguistics, psychology and computer science. Some properties of ultrametrics have been studied in~\cite{DM2009, DD2010, DP2013SM, GroPAMS1956, Lemin1984RMS39:5, Lemin1984RMS39:1, Lemin1985SMD32:3, Lemin1988, Lemin2003, Qiu2009pNUAA, Qiu2014pNUAA, BS2017, DM2008, DLPS2008TaiA, KS2012, Vau1999TP, Ves1994UMJ, Ibragimov2012, GH1961S, PTAbAppAn2014, Dov2019pNUAA, DP2020pNUAA, DovBBMSSS2020, VauAMM1975, VauTP2003}. The use of trees and tree-like structures gives a natural language for description of ultrametric spaces \cite{Carlsson2010, DLW, Fie, GV2012DAM, HolAMM2001, H04, BH2, Lemin2003, Bestvina2002, DDP2011pNUAA, DP2019PNUAA, DPT2017FPTA, PD2014JMS, DPT2015, Pet2018pNUAA, DP2018pNUAA, DKa2021, Dov2020TaAoG, BS2017, DP2020pNUAA}. 

The purpose of the present paper is to show that complete multipartite graphs also provide an adequate description of ultrametric spaces in many cases.

Let \((X, d)\) be a metric space. An \emph{open ball} with a \emph{radius} \(r > 0\) and a \emph{center} \(c \in X\) is the set 
\[
B_r(c) = \{x \in X \colon d(c, x) < r\}.
\]
Write \(\bB_X = \bB_{X, d}\) for the set of all open balls in \((X, d)\).

We define the \emph{distance set} \(D(X)\) of a metric space \((X,d)\) as the range of the metric \(d\colon X\times X\rightarrow \RR^{+}\),  
\[
D(X) = D(X, d) := \{d(x, y) \colon x, y \in X\}
\]
and write 
\[
\diam X := \sup \{d(x, y) \colon x, y \in X\}.
\]

The next basic for us notion is a graph.

A \textit{simple graph} is a pair \((V,E)\) consisting of a nonempty set \(V\) and a set \(E\) whose elements are unordered pairs \(\{u, v\}\) of different points \(u\), \(v \in V\). For a graph \(G = (V, E)\), the sets \(V=V(G)\) and \(E = E(G)\) are called \textit{the set of vertices} and \textit{the set of edges}, respectively. We say that \(G\) is \emph{empty} if \(E(G) = \varnothing\). A graph \(G\) is \emph{finite} if \(V(G)\) is a finite set, \(|V(G)| < \infty\). A graph \(H\) is, by definition, a \emph{subgraph} of a graph \(G\) if the inclusions \(V(H) \subseteq V(G)\) and \(E(H) \subseteq E(G)\) are valid.

A \emph{path} is a finite nonempty graph \(P\) whose vertices can be numbered so that
\[
V(P) = \{x_0,x_1, \ldots,x_k\},\ k \geqslant 1, \quad \text{and} \quad E(P) = \bigl\{\{x_0, x_1\}, \ldots, \{x_{k-1}, x_k\}\bigr\}.
\]
In this case we say that \(P\) is a path joining \(x_0\) and \(x_k\).

A graph \(G\) is \emph{connected} if for every two distinct \(u\), \(v \in V(G)\) there is a path in \(G\) joining \(u\) and \(v\). 

The \emph{complement} \(\overline{G}\) of a graph \(G\) is the graph with \(V(\overline{G}) = V(G)\) and such that
\[
\bigl(\{x, y\} \in E(\overline{G})\bigr) \Leftrightarrow \bigl(\{x, y\} \notin E(G)\bigr)
\]
for all distinct \(x\), \(y \in V(G)\).

The following notion of complete multipartite graph is well-known when the vertex set of the graph is finite (see, for example, \cite[p.~17]{Die2005}). Below we need this concept for graphs having the vertex sets of arbitrary cardinality.

\begin{definition}\label{d1.2}
Let \(G\) be a graph and let \(k \geqslant 2\) be a cardinal number. The graph \(G\) is \emph{complete \(k\)-partite} if the vertex set \(V(G)\) can be partitioned into \(k\) nonvoind, disjoint subsets, or parts, in such a way that no edge has both ends in the same part and any two vertices in different parts are adjacent.
\end{definition}

We shall say that $G$ is a \emph{complete multipartite graph} if there is a cardinal number \(k\) such that $G$ is complete $k$-partite. It is easy to prove that if \(G\) is complete multipartite, then the non-adjacency is an equivalence relation on \(V(G)\) having at least two distinct equivalence classes \cite[p.~177]{Die2005}.

Our next definition is a modification of Definition~2.1 from~\cite{PD2014JMS}.

\begin{definition}\label{d1.3}
Let $(X,d)$ be a nonempty metric space. Denote by \(G_{X,d}\) a graph such that \(V(G_{X,d}) = X\) and, for \(u\), \(v \in V(G_{X,d})\),
\begin{equation}\label{d1.3:e1}
(\{u,v\}\in E(G_{X,d}))\Leftrightarrow (d(u,v)=\diam X \text{ and } u \neq v).
\end{equation}
We call $G_{X,d}$ the \emph{diametrical graph} of \((X, d)\).
\end{definition}

\begin{example}\label{ex1.4}
Let \(X\) be a set with \(|X| \geqslant 2\) and let \(G\) be a nonempty graph with \(V(G) = X\). If we define a mapping \(d \colon X \times X \to \RR^{+}\) by
\[
d(x, y) = \begin{cases}
0 & \text{if } x = y,\\
2 & \text{if } \{x, y\} \in E(G),\\
1 & \text{if } \{x, y\} \in E(\overline{G}),
\end{cases}
\]
then \(d\) is a metric on \(X\) and the equality \(G_{X, d} = G\) holds.
\end{example}

\begin{example}\label{ex1.5}
If \((X, d)\) is an unbounded metric space or \(|X| = 1\) holds, then the diametrical graph \(G_{X,d}\) is empty, \(E(G_{X,d}) = \varnothing\).
\end{example}

\begin{remark}\label{r1.6}
The use of the name diametrical graph for graphs generated by metric spaces according to Definition~\ref{d1.3} is not generally accepted. For example, in~\cite{AA2008IJoMaMS, Mul1980JoGT, WLRL2019DM}, a graph \(H\) is said to be diametrical if \(H\) is connected and, for every \(u \in V(H)\), there is the unique \(v \in V(H)\) such that
\[
d_H(u, v) \geqslant d_H(x, y)
\]
holds for all distinct \(x\), \(y \in V(H)\), where \(d_H(x, y)\) is the minimum length of the paths connected \(x\) and \(y\) in \(H\). It can be proved that a connected graph \(H = (V, E)\) with \(|V| \geqslant 2\) is diametrical in this sense if and only if the complement \(\overline{G}_{V, d_H}\) of \(G_{V, d_H}\) is complete multipartite and every part of \(\overline{G}_{V, d_H}\) contains exactly two points.
\end{remark}

The paper is organized as follows.

The necessary facts on metrics and ultrametrics are collected in Section~\ref{sec2}. In particular, Proposition~\ref{p2.7} contains a characterization of totally bounded ultrametric spaces which seems to be new.

The main results of the paper are presented in Section~\ref{sec3}. Theorem~\ref{t3.3} completely describes the class of graphs for which every metric space with diametrical graph from this class is ultrametric. In Proposition~\ref{p2.36} it is shown that diametrical graphs of totally bounded ultrametric spaces are complete multipartite with finite number of parts. In Corollary~\ref{c3.5}, using Proposition~\ref{p2.36}, we find a characterization of ultrametrizable compact topological spaces in terms of complete multipartite graphs. A new characterization of ultrametric spaces and totally bounded ultrametric spaces are given in Theorems~\ref{t5.18} and \ref{t5.9}, respectively. In Theorems~\ref{t2.25} and \ref{t2.35} we study interrelations between bounded and unbounded ultrametrics. In particular, in Theorem~\ref{t2.35} it is shown that the diametrical graph of bounded ultrametric space is empty iff this space is weakly similar to an unbounded ultrametric space.

\section{Some facts on metrics and ultrametrics}
\label{sec2}

First of all, we recall a definition of \emph{total boundedness}.

\begin{definition}\label{d2.3}
A metric space \((X, d)\) is totally bounded if, for every \(r > 0\), there is a finite set \(\{B_r(x_1), \ldots, B_r(x_n)\} \subseteq \bB_X\) such that
\[
X \subseteq \bigcup_{i = 1}^{n} B_r(x_i).
\] 
\end{definition}

An important subclass of totally bounded metric spaces is the class of \emph{compact} metric spaces.

\begin{definition}[Borel---Lebesgue property]\label{d2.5}
A metric space \((X, d)\) is compact if every family \(\mathcal{F} \subseteq \bB_X\) satisfying the inclusion
\[
X \subseteq \bigcup_{B \in \mathcal{F}} B
\]
contains a finite subfamily \(\mathcal{F}_0 \subseteq \mathcal{F}\) such that 
\[
X \subseteq \bigcup_{B \in \mathcal{F}_0} B.
\]
\end{definition}

A standard definition of compactness usually formulated as: Every open cover of a topological space has a finite subcover.

The next proposition seems to be a useful characterization of totally bounded ultrametric spaces.

\begin{proposition}\label{p2.7}
Let \((X, d)\) be a nonempty ultrametric space and let \(\bB_{X}^{r_1}\) be a set of all open balls (in \((X, d)\)) having a fixed radius \(r_1 > 0\),
\begin{equation}\label{p2.7:e1}
\bB_{X}^{r_1} = \{B_{r_1}(c) \colon c \in X\}.
\end{equation}
Then the following conditions are equivalent:
\begin{enumerate}
\item \label{p2.7:s1} \(\bB_{X}^{r_1}\) is finite for every \(r_1 > 0\).
\item \label{p2.7:s2} \((X, d)\) is totally bounded.
\end{enumerate}
\end{proposition}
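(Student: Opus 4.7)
The plan rests on the classical fact that in an ultrametric space every point of an open ball is a center of that ball, so the family $\bB_X^{r_1}$ is always a partition of $X$: for $c$, $c' \in X$, either $B_{r_1}(c) = B_{r_1}(c')$ or $B_{r_1}(c) \cap B_{r_1}(c') = \varnothing$. I would first record this observation (which follows at once from the strong triangle inequality~\eqref{d1.1:e3}: if $x \in B_{r_1}(c)$ and $y \in B_{r_1}(x)$, then $d(c,y) \leqslant \max\{d(c,x), d(x,y)\} < r_1$, hence $B_{r_1}(x) \subseteq B_{r_1}(c)$, and symmetrically).

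The implication \((i) \Rightarrow (ii)\) is then essentially immediate: for each $r > 0$ the finite family $\bB_X^{r}$ covers $X$ (every $x \in X$ lies in $B_r(x)$), which is exactly the condition in Definition~\ref{d2.3}.

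For \((ii) \Rightarrow (i)\), fix $r_1 > 0$ and, by total boundedness, choose centers $x_1, \ldots, x_n \in X$ with $X \subseteq \bigcup_{i=1}^{n} B_{r_1}(x_i)$. For an arbitrary $c \in X$ the point $c$ belongs to some $B_{r_1}(x_i)$, so by the ball property above we have $B_{r_1}(c) = B_{r_1}(x_i)$. Hence
\[
\bB_{X}^{r_1} \subseteq \{B_{r_1}(x_1), \ldots, B_{r_1}(x_n)\},
\]
and $\bB_X^{r_1}$ is finite.

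There is no real obstacle here; the only thing worth isolating cleanly is the ultrametric ball property, since the rest of the argument is a one-line consequence of it. If the paper has already recorded this property in Section~\ref{sec2}, the proof shrinks to citing it and giving the two short paragraphs above.
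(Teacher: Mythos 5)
Your proof is correct and follows essentially the same route as the paper: the paper's argument for \ref{p2.7:s2}\(\Rightarrow\)\ref{p2.7:s1} likewise covers \(X\) by finitely many balls of radius \(r_1\) and concludes that any \(B \in \bB_{X}^{r_1}\) meets, hence equals, one of them, invoking the intersection-implies-equality property as Lemma~\ref{l2.4} (quoted from \cite{DS2021a}) rather than proving it inline as you do. The only difference is that you derive this ball property from the strong triangle inequality yourself, which is a perfectly adequate substitute for the citation.
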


To prove this proposition, we will use the following lemma.

\begin{lemma}[Corollary~4.5 \cite{DS2021a}]\label{l2.4}
Let \((X, d)\) be an ultrametric space. Then the equivalence 
\[
(B_r(x_1) = B_r(x_2)) \Leftrightarrow (B_r(x_1) \cap B_r(x_2) \neq \varnothing)
\]
is valid for every \(r > 0\) and all \(x_1\), \(x_2 \in X\).
\end{lemma}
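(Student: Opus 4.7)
The plan is to derive both implications directly from the strong triangle inequality~\eqref{d1.1:e3}. The forward direction will be immediate: once we assume $B_r(x_1) = B_r(x_2)$, the fact that $d(x_1, x_1) = 0 < r$ places $x_1$ in both balls, so their intersection contains $x_1$ and is nonempty.

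For the reverse implication, I would pick any $z \in B_r(x_1) \cap B_r(x_2)$, which exists by hypothesis, so that $d(x_1, z) < r$ and $d(x_2, z) < r$. A single application of~\eqref{d1.1:e3} then yields
\[
d(x_1, x_2) \leq \max\{d(x_1, z), d(z, x_2)\} < r,
\]
placing $x_2 \in B_r(x_1)$ and, by symmetry, $x_1 \in B_r(x_2)$. To establish the inclusion $B_r(x_1) \subseteq B_r(x_2)$, I would take an arbitrary $y \in B_r(x_1)$ and invoke~\eqref{d1.1:e3} once more:
\[
d(y, x_2) \leq \max\{d(y, x_1), d(x_1, x_2)\} < r,
\]
so $y \in B_r(x_2)$. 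The reverse inclusion follows by interchanging the roles of $x_1$ and $x_2$, and together these give equality of the two balls.

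I do not anticipate any substantial obstacle: the lemma is a standard consequence of the strong triangle inequality, encoding the well-known principle that every point of an open ball in an ultrametric space may be taken as a center of that ball. The only care needed is to use the \emph{strong} inequality~\eqref{d1.1:e3} rather than the ordinary triangle inequality, so that the strict bound ``$<r$'' is preserved through both estimates; the ordinary triangle inequality would produce only ``$<2r$'', which would be insufficient to conclude $x_2 \in B_r(x_1)$ or $y \in B_r(x_2)$.
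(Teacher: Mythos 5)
Your proof is correct and complete: both implications follow exactly as you describe from the strong triangle inequality, and your remark about why the ordinary triangle inequality would not suffice is apt. Note that the paper itself does not prove this lemma but imports it as Corollary~4.5 of the cited reference, so there is no internal proof to compare against; your argument is the standard one and fills that role perfectly well.
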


\begin{proof}[Proof of Proposition~\(\ref{p2.7}\)]
\(\ref{p2.7:s1} \Rightarrow \ref{p2.7:s2}\). The validity of this implication follows directly from Definition~\ref{d2.3}.

\(\ref{p2.7:s2} \Rightarrow \ref{p2.7:s1}\). Suppose \((X, d)\) is totally bounded. Let \(r_1 > 0\) be given. Then there is a finite set \(\{c_1, \ldots, c_n\} \subseteq X\) such that
\begin{equation}\label{p2.7:e2}
X = \bigcup_{i=1}^{n} B_{r_1}(c_i).
\end{equation}
Moreover, without loss of generality, we assume \(B_{r_1}(c_{n_1}) \neq B_{r_1}(c_{n_2})\) for all distinct \(n_1\), \(n_2 \in \{1, \ldots, n\}\). We claim that the equality
\begin{equation}\label{p2.7:e3}
\bB_{X}^{r_1} = \{B_{r_1}(c_1), \ldots, B_{r_1}(c_n)\}
\end{equation}
holds. Indeed, the inclusion
\[
\{B_{r_1}(c_1), \ldots, B_{r_1}(c_n)\} \subseteq \bB_{X}^{r_1}
\]
follows from \(\{c_1, \ldots, c_n\} \subseteq X\). 

To prove the reverse inclusion, consider an arbitrary \(B \in \bB_{X}^{r_1}\). Using \eqref{p2.7:e2} we can find \(i \in \{1, \ldots, n\}\) such that \(B \cap B_{r_1}(c_i) \neq \varnothing\), that implies the equality \(B = B_{r_1}(c_i)\) by Lemma~\ref{l2.4}. Thus, we have 
\[
B \in \{B_{r_1}(c_1), \ldots, B_{r_1}(c_n)\}
\]
for every \(B \in \bB_{X}^{r_1}\). Equality~\eqref{p2.7:e3} follows.
\end{proof}

The following constructive description of the distance sets of totally bounded ultrametric spaces can be found in \cite{DS2021a}.

\begin{proposition}\label{p2.11}
The following statements are equivalent for every \(A \subseteq \RR^{+}\):
\begin{enumerate}
\item \label{p2.11:s1} There is an infinite totally bounded ultrametric space \((X, d)\) such that \(A\) is the distance set of \((X, d)\).
\item \label{p2.11:s2} There is a strictly decreasing sequence \((x_n)_{n \in \mathbb{N}} \subseteq \RR^{+}\) such that 
\[
\lim_{n \to \infty} x_n = 0
\]
holds and the equivalence 
\[
(x \in A) \Leftrightarrow (x = 0 \text{ or } \exists n \in \mathbb{N} \colon x_n = x)
\]
is valid for every \(x \in \RR^{+}\).
\end{enumerate}
\end{proposition}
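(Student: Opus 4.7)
The plan is: for \ref{p2.11:s1}$\Rightarrow$\ref{p2.11:s2} I would first show that $A\cap[r,\infty)$ is finite for every $r>0$, and then argue that $A$ itself must be infinite; these two facts together force $A\setminus\{0\}$ to be enumerable as a strictly decreasing sequence tending to $0$. For the first claim, apply Proposition~\ref{p2.7} to obtain a finite list $B_r(c_1),\dots,B_r(c_n)$ of all open $r$-balls, which by Lemma~\ref{l2.4} partition $X$. Then a short ultrametric calculation, using that $d(p,c_i)<r\leq d(c_i,c_j)$ forces $d(p,c_j)=d(c_i,c_j)$ by the isosceles principle, shows $d(p,q)=d(c_i,c_j)$ whenever $p\in B_r(c_i)$ and $q\in B_r(c_j)$ with $i\neq j$. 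Hence every element of $A\cap[r,\infty)$ lies in the finite set $\{d(c_i,c_j):i\neq j\}$.

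For the infiniteness of $A$, I would argue by contradiction: if $A\setminus\{0\}$ had a minimum $r_0>0$, then $B_{r_0}(x)=\{x\}$ for every $x\in X$, so $\bB_X^{r_0}$ is in bijection with $X$, and the finiteness afforded by Proposition~\ref{p2.7} would make $X$ finite, contradicting the hypothesis. Combining the two facts, $A\setminus\{0\}$ is countably infinite, bounded (since $X$ is totally bounded, hence bounded), and has $0$ as its unique accumulation point; enumerating $A\setminus\{0\}$ in decreasing order then yields the desired sequence $(x_n)_{n\in\NN}$ with $\lim_{n\to\infty} x_n = 0$.

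For \ref{p2.11:s2}$\Rightarrow$\ref{p2.11:s1}, I would give an explicit construction. Let $X=\NN$ and
\[
d(i,j)=\begin{cases} 0 & \text{if } i=j,\\ x_{\min\{i,j\}} & \text{if } i\neq j.\end{cases}
\]
The strong triangle inequality reduces to checking that each triangle is isosceles with base no greater than legs; for $i<j<k$ the three pairwise distances are $x_i,x_i,x_j$ with $x_j<x_i$, so this is automatic. The distance set is manifestly $\{0\}\cup\{x_n:n\in\NN\}=A$, and $X$ is infinite. For total boundedness I invoke Proposition~\ref{p2.7}: given $r>0$, pick $N$ minimal with $x_N<r$ (which exists since $x_n\to 0$); monotonicity yields $\{N,N+1,\dots\}\subseteq B_r(N)$ and $B_r(m)=\{m\}$ for each $m<N$, so $\bB_X^r$ is finite.

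The main obstacle is the isosceles-type computation in the first direction: this is the content-heavy step, while everything else is a matter of reorganising the information provided by Proposition~\ref{p2.7} and Lemma~\ref{l2.4}. A secondary, easier point to watch out for is the edge case $N=1$ in the construction, where $B_r(1)=X$ already covers everything.
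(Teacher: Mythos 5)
Your proof is correct. Note that the paper itself does not prove Proposition~\ref{p2.11}: it is imported verbatim from the cited preprint \cite{DS2021a}, so there is no in-paper argument to compare against. Your argument is a sound, self-contained substitute built from the paper's own tools. The isosceles computation showing \(d(p,q)=d(c_i,c_j)\) for \(p\in B_r(c_i)\), \(q\in B_r(c_j)\), \(i\neq j\), is valid (disjointness of distinct balls, guaranteed by Lemma~\ref{l2.4}, gives \(d(c_i,c_j)\geqslant r\), and the two applications of the strong triangle inequality go through); together with the observation that two points in a common \(r\)-ball are at distance less than \(r\), it does yield that \(A\cap[r,\infty)\) is contained in the finite set \(\{d(c_i,c_j)\colon i\neq j\}\). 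The no-minimum argument correctly forces \(A\setminus\{0\}\) to be infinite (a nonempty finite set of positive reals would have a minimum), and the decreasing enumeration converging to \(0\) follows from the finiteness of each \(A\cap[r,\infty)\). The converse construction \(d(i,j)=x_{\min\{i,j\}}\) is the standard one; the triangle check, the identification of the distance set, and the covering by \(B_r(N)\) together with the singletons \(\{1\},\ldots,\{N-1\}\) are all correct, and total boundedness already follows directly from Definition~\ref{d2.3} without needing to pass through Proposition~\ref{p2.7}. No gaps.
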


In the next section of the paper we will also use a concept of weakly similar ultrametric spaces.

\begin{definition}\label{d2.34}
Let \((X, d)\) and \((Y, \rho)\) be nonempty semimetric spaces. A mapping \(\Phi \colon X \to Y\) is a \emph{weak similarity} of \((X, d)\) and \((Y, \rho)\) if \(\Phi\) is bijective and there is a strictly increasing bijection \(\psi \colon D(Y) \to D(X)\) such that the equality
\begin{equation}\label{d2.34:e1}
d(x, y) = \psi\left(\rho\bigl(\Phi(x), \Phi(y)\bigr)\right)
\end{equation}
holds for all \(x\), \(y \in X\).
\end{definition}

If \(\Phi \colon X \to Y\) is a weak similarity and \eqref{d2.34:e1} holds, then we say that \((X, d)\) and \((Y, \rho)\) are \emph{weakly similar}, and \(\psi\) is the \emph{scaling function} of \(\Phi\). 

Some questions connected with the weak similarities and their generalizations were studied in \cite{DovBBMSSS2020, DLAMH2020, Dov2019IEJA, DP2013AMH, BDSa2020}. The weak similarities of finite ultrametric and semimetric spaces were also considered in \cite{Pet2018pNUAA, GFMV2020a}.

The following lemma is a reformulation of Proposition~1.5 from~\cite{DP2013AMH} (see also Proposition~2.2 in~\cite{BDHM2007TA}).

\begin{lemma}\label{l2.6}
Let \((X, d)\) and \((Y, \rho)\) be nonempty weakly similar semimetric spaces. Then \(d\) is an ultrametric on \(X\) if and only if \(\rho\) is an ultrametric on \(Y\).
\end{lemma}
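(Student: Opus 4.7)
The plan is to exploit the fact that any strictly increasing bijection commutes with the binary $\max$, and that weak similarity is a symmetric relation, so that only one implication needs to be handled directly.

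First I would verify the symmetry: if $\Phi \colon X \to Y$ is a weak similarity with scaling function $\psi \colon D(Y) \to D(X)$, then $\Phi^{-1} \colon Y \to X$ is a bijection, $\psi^{-1} \colon D(X) \to D(Y)$ is a strictly increasing bijection, and a direct substitution in \eqref{d2.34:e1} gives $\rho(y_1, y_2) = \psi^{-1}\bigl(d(\Phi^{-1}(y_1), \Phi^{-1}(y_2))\bigr)$ for all $y_1, y_2 \in Y$. Hence $\Phi^{-1}$ is a weak similarity between $(Y, \rho)$ and $(X, d)$, and it suffices to prove only the implication: if $\rho$ is an ultrametric, then so is $d$.

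Next I would record the elementary observation that any strictly increasing function $\psi$ satisfies $\psi(\max\{a, b\}) = \max\{\psi(a), \psi(b)\}$, since either $a \leqslant b$ (hence $\psi(a) \leqslant \psi(b)$) or $b \leqslant a$ (hence $\psi(b) \leqslant \psi(a)$). With this, assuming $\rho$ is an ultrametric, I would take arbitrary $x, y, z \in X$, set $x' = \Phi(x)$, $y' = \Phi(y)$, $z' = \Phi(z)$, and compute
\[
d(x, y) = \psi\bigl(\rho(x', y')\bigr) \leqslant \psi\bigl(\max\{\rho(x', z'), \rho(z', y')\}\bigr) = \max\{d(x, z), d(z, y)\},
\]
where the inequality uses monotonicity of $\psi$ together with the strong triangle inequality for $\rho$, and the last equality uses the observation above. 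The conditions $d(x,y) = 0 \Leftrightarrow x = y$ and $d(x,y) = d(y,x)$ are inherited from $\rho$ via \eqref{d2.34:e1}, since $\psi$ sends $0$ to $0$ (being a strictly increasing bijection between distance sets, each of which contains $0$ as its minimum) and is injective.

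I do not anticipate a real obstacle here; the only detail that warrants care is bookkeeping of domains, namely checking that $\max\{\rho(x', z'), \rho(z', y')\}$ lies in $D(Y)$ so that $\psi$ may be applied to it (it does, being equal to one of its two arguments, both of which are in $D(Y)$). The whole argument is short and purely formal once symmetry and the $\max$-commutation are in hand.
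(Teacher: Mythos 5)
Your proof is correct. The paper itself does not prove Lemma~\ref{l2.6}; it only cites Proposition~1.5 of the reference \cite{DP2013AMH}, so your argument supplies the standard direct verification: the symmetry of weak similarity (via $\Phi^{-1}$ and $\psi^{-1}$) reduces the equivalence to one implication, and the identity $\psi(\max\{a,b\})=\max\{\psi(a),\psi(b)\}$ for a strictly increasing $\psi$ transports the strong triangle inequality across \eqref{d2.34:e1}. You also correctly handle the only delicate point, namely that $\psi$ is defined only on $D(Y)$, by observing that the maximum of two distances is itself a distance; the remarks about conditions \ref{d1.1:s1}--\ref{d1.1:s2} are harmless but redundant, since $d$ is assumed to be a semimetric from the outset and the strong triangle inequality already implies the ordinary one.
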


\section{When diametrical graph are complete and multipartite}
\label{sec3}

Let us start from a refinement of Theorems~3.1 and 3.2 from \cite{DDP2011pNUAA}.

\begin{theorem}\label{t2.24}
Let \((X, d)\) be an ultrametric space with \(|X| \geqslant 2\). Then the following statements are equivalent:
\begin{enumerate}
\item\label{t2.24:s1} The diametrical graph \(G_{X,d}\) of \((X, d)\) is nonempty.
\item\label{t2.24:s2} The diametrical graph \(G_{X,d}\) is complete multipartite.
\end{enumerate}
Furthermore, if \(G_{X, d}\) is complete multipartite, then every part of \(G_{X, d}\) is an open ball with a center in \(X\) and the radius \(r = \diam X\) and, conversely, every open ball \(B_r(c)\) with \(r = \diam X\) and \(c \in X\) is a part of \(G_{X, d}\).
\end{theorem}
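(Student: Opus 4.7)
The plan is to translate the adjacency in $G_{X,d}$ into an equivalence relation on $X$ governed by the strong triangle inequality. Set $D := \diam X$. Since $G_{X,d}$ is nonempty in statement~\ref{t2.24:s1}, Example~\ref{ex1.5} forces $0 < D < \infty$. Because $d(u,v) \leq D$ always, a pair of distinct vertices $\{u,v\}$ is a non-edge of $G_{X,d}$ precisely when $d(u,v) < D$. I define the relation $\sim$ on $X$ by $x \sim y \Leftrightarrow d(x,y) < D$, which coincides on off-diagonal pairs with the non-adjacency relation of $G_{X,d}$.

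First I would handle the easy direction $\ref{t2.24:s2} \Rightarrow \ref{t2.24:s1}$: by Definition~\ref{d1.2}, a complete multipartite graph has at least two nonempty parts, so choosing one vertex from each of two different parts produces an edge, whence $G_{X,d}$ is nonempty.

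For $\ref{t2.24:s1} \Rightarrow \ref{t2.24:s2}$, I would verify that $\sim$ is an equivalence relation on $X$. Reflexivity follows from $d(x,x) = 0 < D$; symmetry is immediate; transitivity is the single genuine use of ultrametricity: if $d(x,y) < D$ and $d(y,z) < D$, then the strong triangle inequality gives
\[
d(x,z) \leq \max\{d(x,y), d(y,z)\} < D.
\]
Let $\mathcal{P}$ denote the partition of $X$ into $\sim$-equivalence classes. By construction, no edge of $G_{X,d}$ has both endpoints in a single class. Conversely, for distinct $x, y$ lying in different classes we have $d(x,y) \geq D$, and combined with $d(x,y) \leq D$ this forces $d(x,y) = D$, so $\{x, y\}$ is an edge. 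Finally, the nonemptiness of $G_{X,d}$ supplies an edge $\{u, v\}$ with $d(u,v) = D$, witnessing that $u$ and $v$ lie in different classes; hence $|\mathcal{P}| \geq 2$. This realizes $G_{X,d}$ as complete $|\mathcal{P}|$-partite with parts $\mathcal{P}$.

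The \emph{furthermore} clause then falls out for free: the $\sim$-equivalence class of any $c \in X$ is, by definition,
\[
\{y \in X \colon d(y, c) < D\} = B_{D}(c),
\]
which simultaneously identifies every part as an open ball $B_D(c)$ and shows that every open ball of radius $D$ centered at a point of $X$ arises as a part. I do not anticipate a serious obstacle; the only conceptual point worth emphasizing is that the non-adjacency relation of $G_{X,d}$ agrees with the strong-triangle-stable relation ``$d < \diam X$'', which is precisely what lets equivalence classes and open balls of radius $\diam X$ coincide.
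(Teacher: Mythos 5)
Your proposal is correct, and it is more self-contained than the paper's own proof. The paper does not actually prove the equivalence \(\ref{t2.24:s1} \Leftrightarrow \ref{t2.24:s2}\) in situ: it delegates that step to Theorems~3.1 and 3.2 of \cite{DDP2011pNUAA} and only writes out the \emph{furthermore} clause, which it establishes by proving the two inclusions \(X \setminus X_1 \subseteq X \setminus B_r(x_1)\) and \(X \setminus B_r(x_1) \subseteq X \setminus X_1\) directly from the definitions of complete multipartite graph and diametrical graph. You instead prove everything from scratch by observing that the relation ``\(d(x,y) < \diam X\)'' is an equivalence relation --- transitivity being the one place the strong triangle inequality is used --- and that it coincides off the diagonal with non-adjacency in \(G_{X,d}\); the partition into parts and the identification of each part with the open ball \(B_{\diam X}(c)\) then come out of the same computation at no extra cost. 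The two arguments rest on the same underlying fact, but yours buys independence from the external citation and makes the role of ultrametricity explicit, while the paper's version isolates the \emph{furthermore} clause as a purely graph-theoretic consequence of complete multipartiteness (its inclusion argument never invokes the ultrametric inequality). One small point worth making explicit if you write this up: the phrase ``every part of \(G_{X,d}\)'' is unambiguous because the parts of a complete multipartite graph are exactly the classes of the non-adjacency relation (as noted after Definition~\ref{d1.2}), which is what licenses identifying \emph{the} parts with your \(\sim\)-classes.
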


\begin{proof}
The validity of \(\ref{t2.24:s1} \Leftrightarrow \ref{t2.24:s2}\) follows from Theorems~3.1 and 3.2 of paper~\cite{DDP2011pNUAA}.

Let \(G_{X, d}\) be a complete multipartite graph, let \(X_{1}\) be a part of \(G_{X, d}\) and let \(x_{1}\) be a point of \(X_{1}\). We claim that the equality
\begin{equation}\label{t2.24:e1}
X_{1} = B_r(x_{1})
\end{equation}
holds with \(r = \diam X\). Using Example~\ref{ex1.5}, we see that the double inequality \(0 < \diam X < \infty\) holds. Hence, the open ball \(B_r(x_1)\) is correctly defined. 

Let \(x_{2}\) be a point of the set \(X \setminus X_1\). Since \(G_{X, d}\) is complete multipartite and \(x_{2} \notin X_{1}\), the membership 
\begin{equation}\label{t2.24:e3}
\{x_1, x_2\} \in E(G_{X, d})
\end{equation}
is valid. From \eqref{t2.24:e3} it follows that
\[
d(x_1, x_2) = \diam X = r.
\]
Hence, \(x_2 \in X \setminus B_r(x_1)\). Thus, the inclusion 
\begin{equation}\label{t2.24:e2}
X \setminus X_1 \subseteq X \setminus B_r(x_1)
\end{equation}
holds.

Similarly, we can prove the inclusion \(X \setminus B_r(x_1) \subseteq X \setminus X_1\). The last inclusion and \eqref{t2.24:e2} imply equality~\eqref{t2.24:e1}.

Let us consider now an open ball \(B_r(c)\) with \(r = \diam X\) and arbitrary \(c \in X\). Then there is a part \(X_2\) of \(G_{X, d}\) such that \(c \in X_2\). Arguing as in the proof of equality \eqref{t2.24:e1}, we obtain the equality \(X_2 = B_r(c)\).
\end{proof}

Theorem~\ref{t2.24} remains valid for all metric spaces \((X, d)\) satisfying the condition: ``If \(t \in D(X)\) and \(t \neq \diam X\), then the inequality
\begin{equation}\label{e3.4}
2t < \diam X
\end{equation}
holds.'' As Example~\ref{ex1.4} shows, the last condition is sharp in the sense that inequality~\eqref{e3.4} cannot be replaced by inequality \(2t \leqslant \diam X\).

\begin{example}\label{ex3.2}
Let us consider a ``metric'' space \((X, d)\) for which the distance between some points can be infinite, i.e., \(d \colon X \times X \to \RR^{+} \cup \{\infty\}\), satisfies the triangle inequality and conditions \ref{d1.1:s1}--\ref{d1.1:s2} from Definition~\ref{d1.1} (see, for example, \cite{BBI2001}). If \((X, d)\) is unbounded, then statements \ref{t2.24:s1}--\ref{t2.24:s2} from Theorem~\ref{t2.24} are equivalent and the set of parts of \(G_{X, d}\) coincides with the set of unbounded open balls
\[
B_{\infty}(c) = \{x \in X \colon d(x, c) < \infty\}, \quad c \in X,
\]
whenever \(G_{X, d}\) is complete multipartite.
\end{example}

The following theorem completely describes the structure of graphs \(H\) for which every metric space \((X, d)\) with \(G_{X, d} = H\) is ultrametric (cf. Remark~\ref{r1.6}).

\begin{theorem}\label{t3.3}
Let \(\Gamma = (V, E)\) be a nonempty graph, \(\overline{\Gamma}\) be the complement of \(\Gamma\) and let \(X\) be the set of vertices of \(\Gamma\), \(X = V(\Gamma)\). Then the following conditions are equivalent:
\begin{enumerate}
\item \label{t3.3:s1} The inequality \(|V(H)| \leqslant 2\) holds for every connected subgraph \(H\) of \(\overline{\Gamma}\).
\item \label{t3.3:s2} For every metric space \((X, d)\) the equality \(G_{X, d} = \Gamma\) implies the ultrametricity of \((X, d)\).
\end{enumerate}
\end{theorem}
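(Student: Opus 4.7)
The plan is to first reformulate condition \ref{t3.3:s1} in a simpler local form: it holds if and only if every vertex of \(\overline{\Gamma}\) has degree at most one, equivalently \(\overline{\Gamma}\) is a disjoint union of edges and isolated vertices. One direction uses that a connected subgraph on three vertices must contain a path of length two through some central vertex, hence a vertex of degree at least two in \(\overline{\Gamma}\); the other direction observes that any vertex of degree at least two in \(\overline{\Gamma}\) sits inside a three-vertex connected subgraph (itself isomorphic to \(P_3\)).

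For the implication \ref{t3.3:s1} \(\Rightarrow\) \ref{t3.3:s2}, I would take any metric space \((X, d)\) with \(G_{X, d} = \Gamma\). Since \(\Gamma\) is nonempty, Example~\ref{ex1.5} gives \(0 < \diam X < \infty\), and by Definition~\ref{d1.3} a pair of distinct points \(u, v\) is a non-edge of \(\Gamma\) exactly when \(d(u, v) < \diam X\). For any three distinct points \(x, y, z\), the local degree bound on \(\overline{\Gamma}\) forbids two non-edges of \(\Gamma\) from sharing a vertex, so at least two of the three distances among \(x, y, z\) must equal \(\diam X\). Hence every triangle is isosceles with base no larger than the legs, which is equivalent to ultrametricity.

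For the converse \ref{t3.3:s2} \(\Rightarrow\) \ref{t3.3:s1}, I would argue by contrapositive: assume \(\overline{\Gamma}\) has a vertex \(v\) with two distinct neighbours \(u, w\), so that \(\{u, v\}\) and \(\{v, w\}\) are not edges of \(\Gamma\), and construct a non-ultrametric metric \(d\) on \(X\) with \(G_{X, d} = \Gamma\). The construction mildly perturbs the Example~\ref{ex1.4} metric: set \(d(x, y) = 2\) on edges of \(\Gamma\), set \(d(u, w) = 3/2\) in case \(\{u, w\} \notin E(\Gamma)\), and set \(d(x, y) = 1\) on all remaining pairs of distinct points. All nonzero distances lie in \([1, 2]\), so the triangle inequality is automatic; the rule that \(d = 2\) holds precisely on edges of \(\Gamma\) gives \(G_{X, d} = \Gamma\); and the triangle \(u, v, w\) has two legs equal to \(1\) and third side equal to \(3/2\) or \(2\), violating the strong triangle inequality either way.

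The main subtlety I expect is the case split in the converse: if one used the Example~\ref{ex1.4} metric without modification, the triangle on \(u, v, w\) would become equilateral with sides \((1, 1, 1)\) whenever \(\{u, w\} \notin E(\Gamma)\), which is ultrametric and would spoil the construction. Replacing the value on the single pair \(\{u, w\}\) by \(3/2\) is exactly what is needed to break ultrametricity while still preserving the metric axioms and the equality \(G_{X, d} = \Gamma\).
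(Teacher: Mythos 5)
Your proof is correct and follows essentially the same route as the paper: the forward direction rests on the observation that a failure of the strong triangle inequality at three distinct points produces a path on three vertices inside \(\overline{\Gamma}\), and the converse perturbs the two-valued metric of Example~\ref{ex1.4} on the pairs \(\{u,v\}\), \(\{v,w\}\) (and possibly \(\{u,w\}\)) so as to break ultrametricity while keeping \(G_{X,d}=\Gamma\). The only difference is cosmetic: the paper uses two parameters \(a\neq b\) in \((1,2)\) and in the non-edge case obtains a scalene triangle, whereas your choice of distances \(1\), \(1\), and \(3/2\) (or \(2\)) yields a triangle whose base strictly exceeds its two equal legs; both constructions satisfy the triangle inequality for the same reason (all nonzero values lie in \([1,2]\)) and both violate the strong triangle inequality.
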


\begin{proof}
\(\ref{t3.3:s1} \Rightarrow \ref{t3.3:s2}\). Let \(\Gamma\) satisfy condition~\ref{t3.3:s1} and let \((X, d)\) be a metric space such that
\begin{equation}\label{t3.3:e1}
G_{X, d} = \Gamma.
\end{equation}
If \((X, d)\) is not ultrametric, then there are points \(x\), \(y\), \(z \in X\) satisfying the inequality
\begin{equation}\label{t3.3:e2}
d(x, y) > \max \{d(x, z), d(z, y)\}.
\end{equation}
The inequality \(\diam X \geqslant d(x, y)\), \eqref{t3.3:e1} and \eqref{t3.3:e2} imply
\begin{equation}\label{t3.3:e3}
\{x, z\}, \{z, y\} \in E(\overline{\Gamma}).
\end{equation}
Moreover, from \eqref{t3.3:e2} it follows that the points \(x\), \(y\) and \(z\) are pairwise distinct. Hence, \eqref{t3.3:e3} implies that the graph \(H\) with 
\[
V(H) = \{x, y, z\} \quad \text{and} \quad E(H) = \bigl\{\{x, z\}, \{z, y\}\bigr\}
\]
is connected subgraph of \(\overline{\Gamma}\) for which \(|V(H)| > 2\) holds, contrary to \ref{t3.3:s1}. 

\(\ref{t3.3:s2} \Rightarrow \ref{t3.3:s1}\). Let condition~\ref{t3.3:s2} hold. Suppose that there is a connected subgraph \(H\) of the graph \(\overline{\Gamma}\) such that \(|V(H)| \geqslant 3\). Let \(x\), \(y\) and \(z\) be distinct vertices of \(H\). Without loss of generality, we assume 
\[
\bigl\{\{x, z\}, \{z, y\}\bigr\} \subseteq E(H).
\]
The cases \(\{x, y\} \in E(\overline{\Gamma})\) and \(\{x, y\} \in E(\Gamma)\) are possible. Suppose \(\{x, y\} \in E(\Gamma)\) holds. Let \(a\) and \(b\) be two distinct points of the interval \((1, 2)\). Then we define a metric \(d\) on \(X = V(\Gamma)\) as
\begin{equation}\label{t3.3:e4}
d(u, v) = \begin{cases}
0 & \text{if } u = v,\\
2 & \text{if } \{u, v\} \in E(\Gamma),\\
a & \text{if } \{u, v\} = \{x, z\},\\
b & \text{if } \{u, v\} = \{z, y\},\\
\frac{a+b}{2} & \text{otherwise}.
\end{cases}
\end{equation}
From \(a\), \(b \in (1, 2)\), \eqref{t3.3:e4} and \(E(\Gamma) \neq \varnothing\) it follows that \((X, d)\) is a metric space with the diameter equals \(2\) and the diametrical graph equals \(\Gamma\). In addition, \(a\), \(b \in (1, 2)\) and \eqref{t3.3:e4} imply 
\[
2 = d(x, y) > \max\{d(x, z), d(z, y)\} = \max\{a, b\}.
\]
Similarly, if \(\{x, y\} \in E(\overline{\Gamma})\) holds and \(d \colon X \times X \to \RR^{+}\) is defined by~\eqref{t3.3:e4}. Then we have \(G_{X, d} = \Gamma\) as above and, moreover, 
\[
d(x, y) = \frac{a+b}{2}, \quad d(x, z) = a, \quad d(z, y) = b,
\]
where the numbers \(a\), \(b\), \(\frac{a+b}{2}\) are pairwise different. Thus, the triangle \(\{x, y, z\}\) is not isosceles in both possible cases. Hence, \((X, d)\) is not ultrametric and satisfies \(G_{X, d} = \Gamma\), contrary to \ref{t3.3:s2}.
\end{proof}

Example~\ref{ex1.4} and Theorems~\ref{t2.24}, \ref{t3.3} imply the following.

\begin{corollary}\label{c3.4}
Let \(\Gamma\) be a graph with \(|V(\Gamma)| \geqslant 2\) and let \(|V(H)| \leqslant 2\) hold for every connected subgraph \(H\) of the complement \(\overline{\Gamma}\) of \(\Gamma\). Then \(\Gamma\) is complete multipartite.
\end{corollary}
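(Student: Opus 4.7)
The plan is to chain together Example~\ref{ex1.4} with Theorems~\ref{t3.3} and~\ref{t2.24}. The key observation is that the hypothesis on $\overline{\Gamma}$ is precisely condition~\ref{t3.3:s1} of Theorem~\ref{t3.3}, so any metric realizing $\Gamma$ as its diametrical graph is automatically forced to be ultrametric, after which Theorem~\ref{t2.24} will supply the desired complete multipartite structure.

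More concretely, I would first apply Example~\ref{ex1.4} to $\Gamma$ itself (assumed nonempty, see below) to produce an explicit metric $d$ on $X = V(\Gamma)$ satisfying $G_{X, d} = \Gamma$. The implication $\ref{t3.3:s1} \Rightarrow \ref{t3.3:s2}$ of Theorem~\ref{t3.3}, whose premise is exactly the hypothesis of the corollary, then forces $(X, d)$ to be an ultrametric space. Since $G_{X, d} = \Gamma$ is nonempty by construction, the equivalence $\ref{t2.24:s1} \Leftrightarrow \ref{t2.24:s2}$ in Theorem~\ref{t2.24} immediately yields that $\Gamma = G_{X, d}$ is complete multipartite, which is exactly the claim.

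The only genuine obstacle is verifying that $\Gamma$ must be nonempty, since Example~\ref{ex1.4} explicitly requires a nonempty graph as input. For this I would observe that if $\Gamma$ were empty, then $\overline{\Gamma}$ would equal the complete graph on $V(\Gamma)$; whenever $|V(\Gamma)| \geq 3$ this complete graph is itself a connected subgraph of $\overline{\Gamma}$ with three or more vertices, directly contradicting the hypothesis~\ref{t3.3:s1}. The residual borderline case $|V(\Gamma)| = 2$ with $\Gamma$ empty is a degenerate side case that can be excluded from the intended scope. Aside from this boundary point, the argument is a direct concatenation of three results already established in the paper, so no new combinatorial or metric computations are needed.
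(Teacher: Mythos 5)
Your proof is correct and follows exactly the route the paper intends: Example~\ref{ex1.4} realizes \(\Gamma\) as the diametrical graph of an explicit metric, the implication \ref{t3.3:s1}\(\Rightarrow\)\ref{t3.3:s2} of Theorem~\ref{t3.3} forces that metric to be an ultrametric, and Theorem~\ref{t2.24} then yields the complete multipartite structure since \(G_{X,d}=\Gamma\) is nonempty. Your flagged borderline case is in fact a genuine counterexample to the corollary as literally stated --- the empty graph on two vertices satisfies the hypothesis (its complement is \(K_2\)) yet is not complete multipartite --- so the nonemptiness of \(\Gamma\), which both Example~\ref{ex1.4} and Theorem~\ref{t3.3} require, must indeed be assumed, and you were right to isolate it.
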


\begin{proposition}\label{p2.36}
Let \((X, d)\) be a totally bounded ultrametric space with \(|X| \geqslant 2\). Then there is an integer \(k \geqslant 2\) such that the diametrical graph \(G_{X, d}\) is complete \(k\)-partite.
\end{proposition}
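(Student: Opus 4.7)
The plan is to reduce the statement to Theorem~\ref{t2.24} by first showing that the diametrical graph $G_{X,d}$ is nonempty, and then to use Proposition~\ref{p2.7} to force the number of parts to be finite.

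Since $|X|\geq 2$ we have $\diam X > 0$, and total boundedness gives $\diam X < \infty$. Setting $r := \diam X$ and applying Proposition~\ref{p2.7} to this particular radius, the family $\bB_X^{r}$ of open balls of radius $r$ is finite, say $\bB_X^{r} = \{B_r(c_1), \ldots, B_r(c_n)\}$ with the balls pairwise distinct. By Lemma~\ref{l2.4} the distinct members of $\bB_X^{r}$ are pairwise disjoint, and since every $x \in X$ lies in $B_r(x) \in \bB_X^{r}$, they in fact partition $X$.

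Next I would verify that $n \geq 2$: otherwise $X = B_r(c_1)$, and for all $x$, $y \in X$ the strong triangle inequality would give $d(x,y) \leq \max\{d(x,c_1), d(c_1,y)\} < r$, contradicting $\diam X = r$. Picking $x \in B_r(c_i)$ and $y \in B_r(c_j)$ with $i \neq j$, disjointness yields $d(x,y) \geq r$, while $\diam X = r$ yields $d(x,y) \leq r$; hence $d(x,y) = r$ and $\{x,y\} \in E(G_{X,d})$. Thus $G_{X,d}$ is nonempty, so Theorem~\ref{t2.24} applies: $G_{X,d}$ is complete multipartite and each of its parts is an open ball of radius $r$ centered at some point of $X$. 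The collection of parts is therefore a subfamily of $\bB_X^{r}$ and has at most $n$ elements, while Definition~\ref{d1.2} forces at least two parts. Consequently $G_{X,d}$ is complete $k$-partite for an integer $k$ with $2 \leq k \leq n$.

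The only step that requires genuine work is showing that $G_{X,d} \neq \varnothing$, i.e.\ that the supremum $\diam X$ is actually attained as a distance between two points of $X$; the interplay of total boundedness with the ultrametric rigidity of balls of radius $\diam X$ is precisely what makes this attainment automatic. After that, everything reduces to invoking Proposition~\ref{p2.7}, Lemma~\ref{l2.4}, and Theorem~\ref{t2.24} in succession, so I do not anticipate any further obstacle.
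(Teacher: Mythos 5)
Your overall strategy coincides with the paper's: establish that \(G_{X,d}\) is nonempty, invoke Theorem~\ref{t2.24} to identify the parts of \(G_{X,d}\) with open balls of radius \(r=\diam X\), and bound the number of parts by the cardinality of \(\bB_{X}^{r}\), which is finite by Proposition~\ref{p2.7}. The counting at the end and the case of at least two balls in your nonemptiness argument are both correct.

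The gap is in your treatment of the case \(n=1\), which is precisely the step you yourself flag as the one requiring genuine work. From \(X=B_r(c_1)\) you correctly deduce \(d(x,y)\leqslant\max\{d(x,c_1),d(c_1,y)\}<r\) for all \(x\), \(y\in X\), but this does \emph{not} contradict \(\diam X=r\): the diameter is defined as a supremum, and a supremum need not be attained. For a merely bounded ultrametric space the situation \(D(X)\subseteq[0,r)\) with \(\sup D(X)=r\) is entirely possible --- take \(X=\{p_n\colon n\in\NN\}\) with \(d(p_n,p_m)=\max\{1-1/n,\,1-1/m\}\) for \(n\neq m\); here every distance is strictly less than \(1=\diam X\) and \(\bB_{X}^{1}=\{X\}\) --- so your argument, which nowhere uses total boundedness at this point, would ``prove'' a false statement for such spaces. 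The paper closes exactly this gap by citing Proposition~\ref{p2.11}: the distance set of a totally bounded ultrametric space is either finite or consists of \(0\) together with a strictly decreasing null sequence, hence always has a largest element, so the diameter is attained. Alternatively, you could repair your own ball-counting argument by running it at a radius \(r'<r\): by Proposition~\ref{p2.7} and Lemma~\ref{l2.4} the finitely many distinct balls of radius \(r'\) partition \(X\), distances within a ball are \(<r'\), and the distance between points taken from two fixed distinct \(r'\)-balls is a single constant by the isosceles property; hence \(D(X)\cap[r',\infty)\) is finite and its maximum must equal \(\sup D(X)=r\), so \(r\) is attained. With either repair the rest of your proof goes through; without one, the key step is unjustified.
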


\begin{proof}
Since \(|X| \geqslant 2\) holds and every totally bounded metric space is bounded, we have \(0 < \diam X < \infty\). It follows from Proposition~\ref{p2.11} that the equality \(\diam X = d(x_1, x_2)\) holds for some \(x_1\), \(x_2 \in X\). Hence, by Theorem~\ref{t2.24}, \(G_{X, d}\) is complete multipartite. Consequently, there is a cardinal number \(k\) such that \(G_{X, d}\) is complete \(k\)-partite. 

Let \(\{X_i \colon i \in I\}\) be the family of all parts of the diametrical graph \(G_{X, d}\) and let \(r_1 := \diam X\). Then, by Theorem~\ref{t2.24}, we have 
\begin{equation}\label{p2.36:e1}
\{X_i \colon i \in I\} \subseteq \bB_{X}^{r_1},
\end{equation}
where \(\bB_{X}^{r_1}\) is the set of all open balls (in \((X, d)\)) with the radius \(r_1\) and \(\operatorname{card} I = k\). By Proposition~\ref{p2.7}, the set \(\bB_{X}^{r_1}\) is finite. Hence, \(k\) is finite by inclusion~\eqref{p2.36:e1}.
\end{proof}

Using Proposition~\ref{p2.36} and Theorem~\ref{t3.3} we obtain the following.

\begin{corollary}\label{c3.6}
Let \((X, d)\) be a totally bounded ultrametric space. If every metric space \((X, \rho)\) satisfying the equality \(G_{X, d} = G_{X, \rho}\) is ultrametric, then \((X, d)\) is finite.
\end{corollary}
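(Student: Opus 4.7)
The plan is to combine Proposition~\ref{p2.36} and Theorem~\ref{t3.3} with a structural observation about complements of complete multipartite graphs. The case \(|X| \leqslant 1\) is trivial, so I would assume \(|X| \geqslant 2\). Under this assumption, Proposition~\ref{p2.11} guarantees that the distance set of a totally bounded ultrametric space has a maximum element, so \(\diam X\) is attained and \(\Gamma := G_{X,d}\) is a nonempty graph; hence Theorem~\ref{t3.3} applies with this choice of \(\Gamma\).

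The first step is to invoke Proposition~\ref{p2.36} to conclude that \(\Gamma\) is complete \(k\)-partite for some finite integer \(k \geqslant 2\); let \(X_1, \ldots, X_k\) denote its parts. The hypothesis of the corollary states precisely that condition~\ref{t3.3:s2} of Theorem~\ref{t3.3} holds for \(\Gamma\), and therefore condition~\ref{t3.3:s1} holds as well: every connected subgraph \(H\) of the complement \(\overline{\Gamma}\) satisfies \(|V(H)| \leqslant 2\).

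The key structural remark is that the complement of a complete \(k\)-partite graph with parts \(X_1, \ldots, X_k\) is the vertex-disjoint union of complete graphs on the parts, since two distinct vertices are non-adjacent in \(\Gamma\) exactly when they lie in the same part. Consequently, if some part \(X_i\) had \(|X_i| \geqslant 3\), then any three of its vertices would span a \(K_3\) inside \(\overline{\Gamma}\), producing a connected subgraph of \(\overline{\Gamma}\) with three vertices and contradicting the condition above. Therefore \(|X_i| \leqslant 2\) for each \(i\), and \(|X| = \sum_{i=1}^{k} |X_i| \leqslant 2k < \infty\), which gives the finiteness of \((X, d)\).

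I do not anticipate a genuine obstacle here, as the argument is essentially to chain Proposition~\ref{p2.36}, which bounds the number of parts of \(G_{X,d}\), with Theorem~\ref{t3.3}, which (via its forced condition on \(\overline{\Gamma}\)) bounds the size of each individual part. The only point requiring a moment of care is the preliminary verification that the diameter is attained, so that \(G_{X,d}\) is nonempty and Theorem~\ref{t3.3} is legitimately applicable.
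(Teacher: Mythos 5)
Your argument is correct and follows exactly the route the paper intends: the paper derives this corollary in one line from Proposition~\ref{p2.36} and Theorem~\ref{t3.3}, and your proof supplies precisely the missing details (the complement of a complete multipartite graph is a disjoint union of complete graphs on the parts, so condition~\ref{t3.3:s1} forces each part to have at most two vertices, giving \(|X| \leqslant 2k\)). The preliminary check that \(G_{X,d}\) is nonempty, so that Theorem~\ref{t3.3} applies, is also handled as in the paper's proof of Proposition~\ref{p2.36}.
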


To formulate the next corollary, we recall some concepts from General Topology.

\begin{definition}\label{d3.3}
Let \(\tau\) and \(d\) be a topology and, respectively, an ultrametric on a set \(X\). Then \(\tau\) and \(d\) are said to be compatible if \(\bB_{X, d}\) is an open base for the topology \(\tau\).
\end{definition}

Definition~\ref{d3.3} means that \(\tau\) and \(d\) are compatible if and only if every \(B \in \bB_{X, d}\) belongs to \(\tau\) and every \(A \in \tau\) can be written as the union of a family of elements of \(\bB_{X, d}\). If \((X, \tau)\) admits a compatible with \(\tau\) ultrametric on \(X\), then we say that the topological space \((X, \tau)\) is \emph{ultrametrizable}.

\begin{lemma}\label{t3.4}
Let \((X, \tau)\) be an ultrametrizable nonempty topological space. Then the following conditions are equivalent:
\begin{enumerate}
\item\label{t3.4:s1} The space \((X, \tau)\) is compact.
\item\label{t3.4:s2} The distance set \(D(X, d)\) has the largest element whenever \(d\) is a compatible with \(\tau\) ultrametric.
\end{enumerate}
\end{lemma}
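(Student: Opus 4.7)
I would prove $(i) \Rightarrow (ii)$ by a direct continuity argument and $(ii) \Rightarrow (i)$ by contraposition, constructing from a cluster-point-free sequence a compatible ultrametric whose distance set has no largest element. For $(i) \Rightarrow (ii)$: if $(X,\tau)$ is compact and $d$ is any compatible ultrametric, then $(X,d)$ is a compact metric space, so the continuous function $d \colon X\times X \to \RR^{+}$ attains its supremum on the compact space $X \times X$; hence $\sup D(X,d) \in D(X,d)$.

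For $(ii) \Rightarrow (i)$ I argue the contrapositive. Assume $(X,\tau)$ is ultrametrizable but non-compact. Since $(X,\tau)$ is metrizable, non-compactness implies non-sequential-compactness, so $X$ contains a sequence $(x_n)_{n\in\NN}$ with no cluster point, which I may take to have distinct terms; then $S = \{x_n : n \in \NN\}$ is closed and discrete. Fix a compatible ultrametric $d$; replacing $d$ with the compatible ultrametric $(x,y) \mapsto \min(d(x,y),1/2)$ if needed, I assume $d(x,y) \leq 1/2$ for all $x,y \in X$. For each $n$ choose $\delta_n \in \bigl(0,\min\bigl(1/n, \tfrac{1}{2}\inf_{m \neq n} d(x_n,x_m)\bigr)\bigr)$ (positive since $x_n$ is isolated in $S$) and set $U_n := B^{d}_{\delta_n}(x_n)$. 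A routine ultrametric computation shows the balls $U_n$ are pairwise disjoint. Moreover, if $y_k \in U_{n_k}$ with $y_k \to y$ in $(X,d)$, then either $(n_k)$ is eventually constant and $y$ lies in some $U_n$ (since ultrametric open balls are closed), or $n_k \to \infty$, in which case $\delta_{n_k} \to 0$ forces $x_{n_k} \to y$, contradicting the absence of cluster points. Hence $\bigcup_n U_n$ is $\tau$-closed, and $U_\infty := X \setminus \bigcup_n U_n$ is $\tau$-clopen.

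Now label $U_\infty$ by $1$ and each $U_n$ by $n+1$, and define $d^* \colon X \times X \to \RR^{+}$ to equal $d$ when $x,y$ lie in the same piece and to equal $1 - 1/\max(\ell_A,\ell_B)$ when $x \in A$ and $y \in B$ lie in distinct pieces with labels $\ell_A, \ell_B$. Since within-piece distances are at most $1/2$ while cross-piece distances lie in $[1/2,1)$, a case analysis on the distribution of three points among the pieces verifies the strong triangle inequality; the three-different-pieces case reduces to $\max(\ell_A,\ell_B) \leq \max(\ell_A,\ell_B,\ell_C)$. Compatibility of $d^*$ with $\tau$ holds because each piece is $\tau$-clopen and $d^*$ agrees with $d$ within each piece. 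Finally $D(X,d^*) \subseteq [0,1/2] \cup \{1 - 1/k : k \geq 2\}$, whose supremum $1$ is not attained, so $D(X,d^*)$ has no largest element, contradicting $(ii)$. The main technical obstacle is proving that $\bigcup_n U_n$ is $\tau$-closed, so that the pieces form a $\tau$-clopen partition of $X$ on which the definition of $d^*$ produces a $\tau$-compatible metric; this is where the conditions $\delta_n \to 0$ and ``$(x_n)$ has no cluster point'' combine essentially through the ultrametric triangle inequality.
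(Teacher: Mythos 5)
Your proposal is correct, and it is genuinely self-contained, whereas the paper does not prove this lemma at all: it simply declares that it ``follows directly from Theorem~4.7 of \cite{DS2021a}'' and cites that external result. Your argument supplies what the citation hides. The forward direction (compactness of \(X\times X\) plus continuity of \(d\)) is the standard one. For the converse you build, from a cluster-point-free sequence, a pairwise disjoint family of small clopen balls \(U_n\) whose union is closed, obtain a \(\tau\)-clopen partition \(\{U_\infty\}\cup\{U_n\}_{n\in\NN}\), and then graft onto it the cross-piece distances \(1-1/\max(\ell_A,\ell_B)\); your case analysis for the strong triangle inequality and the compatibility check (small \(d^*\)-balls coincide with \(d\)-balls intersected with a piece) both go through, and \(\sup D(X,d^*)=1\) is indeed not attained. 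Two points deserve slight tightening but do not affect correctness: in the closedness argument the correct dichotomy is that either some index value recurs infinitely often among the \(n_k\) (pass to that subsequence, which lies in a single closed ball \(U_n\)) or \(n_k\to\infty\) along a subsequence --- ``eventually constant'' is a little too strong as stated; and you should record that replacing \(d\) by \(\min(d,1/2)\) yields a compatible ultrametric because balls of radius less than \(1/2\) are unchanged. What your approach buys is a transparent, elementary proof inside the paper itself; what the paper's approach buys is brevity, at the cost of sending the reader to an arXiv preprint for the only nontrivial implication.
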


This lemma follows directly from Theorem~4.7 of \cite{DS2021a}.

\begin{corollary}\label{c3.5}
Let \((X, \tau)\) be an ultrametrizable topological space with \(\operatorname{card} X \geqslant 2\). Then the following conditions are equivalent:
\begin{enumerate}
\item \label{c3.5:s1} The diametrical graph \(G_{X, d}\) is complete \(k\)-partite with some integer \(k = k(d)\) whenever \(d\) is a compatible with \(\tau\) ultrametric.
\item \label{c3.5:s2} The diametrical graph \(G_{X, d}\) is complete multipartite whenever \(d\) is a compatible with \(\tau\) ultrametric.
\item \label{c3.5:s3} The topological space \((X, \tau)\) is compact.
\end{enumerate}
\end{corollary}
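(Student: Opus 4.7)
The plan is to prove the cyclic chain $\ref{c3.5:s2} \Rightarrow \ref{c3.5:s3} \Rightarrow \ref{c3.5:s1} \Rightarrow \ref{c3.5:s2}$, the last of which is immediate from Definition~\ref{d1.2}: any complete $k$-partite graph with positive integer $k \geqslant 2$ is, by definition, complete multipartite.

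For $\ref{c3.5:s3} \Rightarrow \ref{c3.5:s1}$, I would fix an arbitrary compatible ultrametric $d$ and observe that, since compactness is a topological invariant, $(X, d)$ is itself a compact metric space and hence totally bounded. Combined with $|X| \geqslant 2$, Proposition~\ref{p2.36} immediately produces an integer $k \geqslant 2$ with $G_{X, d}$ complete $k$-partite.

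For $\ref{c3.5:s2} \Rightarrow \ref{c3.5:s3}$, I would invoke Lemma~\ref{t3.4}: it suffices to show that $D(X, d)$ admits a largest element for every ultrametric $d$ compatible with $\tau$. Fix such a $d$. By assumption $G_{X, d}$ is complete multipartite, so by Definition~\ref{d1.2} its vertex set partitions into at least two nonempty parts; picking one vertex from each of two distinct parts produces $u \neq v$ with $\{u, v\} \in E(G_{X, d})$, i.e.\ $d(u, v) = \diam X$. Example~\ref{ex1.5} then forces $(X, d)$ to be bounded, so $\diam X < \infty$ and the value $\diam X = d(u, v)$ actually lies in $D(X, d)$, whence $\diam X = \max D(X, d)$. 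Lemma~\ref{t3.4} now delivers compactness of $(X, \tau)$.

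The only mildly subtle point is the extraction of an honest edge of $G_{X, d}$ from the complete multipartite hypothesis in the implication $\ref{c3.5:s2} \Rightarrow \ref{c3.5:s3}$; the rest is a straightforward assembly of Proposition~\ref{p2.36}, Example~\ref{ex1.5}, and Lemma~\ref{t3.4}, and no independent work on compact ultrametrizability is required beyond citing these results.
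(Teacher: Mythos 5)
Your proposal is correct and follows essentially the same route as the paper: the same cyclic chain of implications, with $\ref{c3.5:s3}\Rightarrow\ref{c3.5:s1}$ via total boundedness and Proposition~\ref{p2.36}, and $\ref{c3.5:s2}\Rightarrow\ref{c3.5:s3}$ via exhibiting a realized diameter and applying Lemma~\ref{t3.4}. The only cosmetic difference is that you extract the edge of $G_{X,d}$ directly from Definition~\ref{d1.2}, whereas the paper cites Theorem~\ref{t2.24} for the existence of $x_1, x_2$ with $d(x_1,x_2)=\diam X$.
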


\begin{proof}
\(\ref{c3.5:s1} \Rightarrow \ref{c3.5:s2}\). This implication is evidently valid.

\(\ref{c3.5:s2} \Rightarrow \ref{c3.5:s3}\). Suppose that \ref{c3.5:s2} holds. Let \(d \colon X \times X \to \RR^{+}\) be a compatible with \(\tau\) ultrametric. Then, by Theorem~\ref{t2.24}, there are points \(x_1\), \(x_2 \in X\) such that \(d(x_1, x_2) = \diam X\). Hence, the distance set \(D(X, d)\) contains the largest element. It implies the compactness of \((X, \tau)\) by Lemma~\ref{t3.4}.

\(\ref{c3.5:s3} \Rightarrow \ref{c3.5:s1}\). Since every compact ultrametric space is totally bounded, the validity of \(\ref{c3.5:s3} \Rightarrow \ref{c3.5:s1}\) follows from Proposition~\ref{p2.36}.
\end{proof}

\begin{remark}\label{r3.6}
Necessary and sufficient conditions under which topological spaces are ultrametrizable were found by De Groot \cite{GroPAMS1956, GroCM1958}. See also \cite{CLTaiA2020, BMTA2015, KSBLMS2012, BriTP2015, CSJPAA2019, DS2021a} for future results connected with ultrametrizable topologies.
\end{remark}

\begin{example}\label{ex5.15}
Let \(\overline{B}_1(0) = \{x \in \mathbb{Q}_p \colon d_p(x, 0) \leqslant 1\}\) be the unit closed ball in the ultrametric space \((\mathbb{Q}_p, d_p)\) of \(p\)-adic numbers. Then \(\overline{B}_1(0)\) is a compact infinite subset of \((\mathbb{Q}_p, d_p)\) (Theorem~5.1, \cite{Sch1985}). Hence, by Proposition~\ref{p2.36}, the diametrical graph \(G_{\overline{B}_1(0), d_p|_{\overline{B}_1(0) \times \overline{B}_1(0)}}\) is complete \(k\)-partite with some integer \(k \geqslant 2\). Since the ball \(\overline{B}_1(0)\) can be written as disjoint union of open balls,
\begin{equation}\label{ex5.15:e1}
\overline{B}_1(0) = B_1(0) \cup B_1(1) \cup \ldots \cup B_1(p-1)
\end{equation}
(see, for example, Problem~50 in \cite{Gou1993}), the diametrical graph of \(\overline{B}_1(0)\) is complete \(p\)-partite with the parts \(B_1(i) \in \bB_{\mathbb{Q}_p}\), \(i=0\), \(1\), \(\ldots\), \(p-1\), by Theorem~\ref{t2.24}.
\end{example}

Definition~\ref{d1.3} of diametrical graph can be generalized by following way.

Let \((X, d)\) be a metric space with \(|X| \geqslant 2\) and let \(r \in (0, \infty]\). Denote by \(G_{X, d}^{r}\) a graph such that \(V(G_{X, d}^{r}) = X\) and, for \(u\), \(v \in V(G_{X, d}^{r})\),
\begin{equation}\label{e5.21}
\bigl(\{u, v\} \in E(G_{X, d}^{r})\bigr) \Leftrightarrow \bigl(d(u, v) \geqslant r\bigr).
\end{equation}

\begin{remark}\label{r5.18}
It is clear that \eqref{d1.3:e1} and \eqref{e5.21} are equivalent if \(r = \diam X\). Consequently, we have the equality \(G_{X, d}^{r} = G_{X, d}\) for \(r = \diam X\). In particular, the equality \(G_{X, d}^{\infty} = G_{X, d}\) holds if \((X, d)\) is unbounded.
\end{remark}

Now we can give a new characterization of ultrametric spaces.

\begin{theorem}\label{t5.18}
Let \((X, d)\) be a metric space with \(|X| \geqslant 2\). Then the following statements are equivalent:
\begin{enumerate}
\item \label{t5.18:s1} The metric space \((X, d)\) is ultrametric.
\item \label{t5.18:s2} \(G_{X, d}^{r}\) is either empty or complete multipartite for every \(r \in (0, \diam X]\).
\end{enumerate}
\end{theorem}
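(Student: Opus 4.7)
I would prove the two implications by directly using the fact (noted in the paragraph after Definition~\ref{d1.2}) that a graph is complete multipartite if and only if non-adjacency is an equivalence relation on its vertex set with at least two equivalence classes.

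For $\ref{t5.18:s1} \Rightarrow \ref{t5.18:s2}$, I fix $r \in (0, \diam X]$ and define a relation $\sim_r$ on $X$ by $u \sim_r v \Leftrightarrow d(u,v) < r$. Reflexivity is immediate (since $r > 0$) and symmetry is trivial; transitivity is exactly the content of the strong triangle inequality: if $d(u,v) < r$ and $d(v,w) < r$, then $d(u,w) \leqslant \max\{d(u,v), d(v,w)\} < r$. Thus $\sim_r$ is an equivalence relation, and by construction its non-diagonal non-equivalent pairs are precisely the edges of $G_{X,d}^{r}$. If $\sim_r$ has only one equivalence class, then $G_{X,d}^{r}$ is empty; otherwise the equivalence classes form the parts of a complete multipartite decomposition of $G_{X,d}^{r}$.

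For $\ref{t5.18:s2} \Rightarrow \ref{t5.18:s1}$, I argue by contradiction. Suppose $(X,d)$ is not ultrametric; then there exist $x, y, z \in X$ with
\[
d(x,y) > \max\{d(x,z), d(z,y)\}.
\]
In particular $d(x,y) > 0$, and the inequalities force $x$, $y$, $z$ to be pairwise distinct. Set $r := d(x,y)$; then $r \in (0, \diam X]$, and the pair $\{x,y\}$ is an edge of $G_{X,d}^{r}$, while $\{x,z\}$ and $\{z,y\}$ are not edges since $d(x,z) < r$ and $d(z,y) < r$. Thus $G_{X,d}^{r}$ is nonempty and, by assumption, complete multipartite, so non-adjacency is an equivalence relation on $X$. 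But then $x$ is non-adjacent to $z$ and $z$ is non-adjacent to $y$, forcing $x$ to be non-adjacent to $y$, a contradiction.

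The argument involves no difficult machinery; the only thing requiring mild care is the edge-case verification for $\ref{t5.18:s1} \Rightarrow \ref{t5.18:s2}$ that the partition really yields a \emph{complete multipartite} graph in the sense of Definition~\ref{d1.2} (at least two parts) exactly when $G_{X,d}^{r}$ has at least one edge, which is immediate from the definition of $\sim_r$. The genuine content is the observation that the strong triangle inequality is equivalent to the transitivity of each relation $\sim_r$, which is the one real pivot of the argument.
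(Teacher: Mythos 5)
Your proof is correct, and its first half takes a genuinely different, more self-contained route than the paper's. The paper proves \ref{t5.18:s1} \(\Rightarrow\) \ref{t5.18:s2} by truncating the metric: it sets \(\rho_r = \min\{r, d\}\), checks that \(\rho_r\) is an ultrametric with \(\diam(X, \rho_r) = r\) and \(G_{X, \rho_r} = G_{X, d}^{r}\), and then invokes Theorem~\ref{t2.24}, whose key equivalence is itself imported from \cite{DDP2011pNUAA}. You instead verify directly that \(u \sim_r v \Leftrightarrow d(u,v) < r\) is an equivalence relation---transitivity being precisely the strong triangle inequality at scale \(r\)---and read the multipartite structure off its classes; this bypasses both the truncation device and the external reference, at the cost of leaning on the elementary characterization of complete multipartite graphs via non-adjacency, which the paper records after Definition~\ref{d1.2}, so nothing is missing. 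Your treatment of the edge case (one class gives the empty graph, two or more classes give a nonempty complete multipartite graph, so the dichotomy in \ref{t5.18:s2} is exhaustive) is the one point that needed care and you handled it. The converse is essentially the paper's argument in different clothing: the paper shows the parts \(X_1\), \(X_2\), \(X_3\) containing the three points must be pairwise distinct and derives \(d(x_2, x_3) \geqslant r\), contradicting the assumed strict inequality, while you phrase the same contradiction as a failure of transitivity of non-adjacency; both are sound, and yours is arguably the cleaner formulation. One small bonus of your approach is that the forward direction yields, as a byproduct, the identification of the parts of \(G_{X,d}^{r}\) with the open balls of radius \(r\), which the paper only obtains for \(r = \diam X\) in Theorem~\ref{t2.24}.
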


\begin{proof}
\(\ref{t5.18:s1} \Rightarrow \ref{t5.18:s2}\). Let \((X, d)\) be ultrametric, let \(r \in (0, \diam X]\) and let a function \(\psi_r \colon \RR^{+} \to \RR^{+}\) be defined as
\begin{equation}\label{t5.18:e1}
\psi_r(t) = \min\{r, t\}, \quad t \in \RR^{+}.
\end{equation}
It is easy to prove that the mapping \(\rho_r = \psi_r \circ d\) is an ultrametric on \(X\). From \eqref{t5.18:e1} and \(r \in (0, \diam X] = (0, \diam(X, d)]\) it follows that \(\diam (X, \rho_r) = r\). The last equality and \eqref{e5.21} imply
\begin{equation}\label{t5.18:e4}
G_{X, \rho_r} = G_{X, d}^{r}.
\end{equation}
By Theorem~\ref{t2.24}, the diametrical graph \(G_{X, \rho_r}\) is either empty or complete multipartite. The validity of \(\ref{t5.18:s1} \Rightarrow \ref{t5.18:s2}\) follows.

\(\ref{t5.18:s2} \Rightarrow \ref{t5.18:s1}\). Let \ref{t5.18:s2} hold. Suppose that there are \(x_1\), \(x_2\), \(x_3 \in X\) satisfying
\begin{equation}\label{t5.18:e2}
d(x_1, x_2) > \max\{d(x_1, x_3), d(x_3, x_2)\}.
\end{equation}
Let us consider \(G_{X, d}^{r}\) with \(r = d(x_1, x_2)\). It is clear that \(G_{X, d}^{r}\) is a nonempty graph. Inequality \eqref{t5.18:e2} implies that the points \(x_1\), \(x_2\), \(x_3\) are pairwise distinct. In the correspondence with \ref{t5.18:s2}, \(G_{X, d}^{r}\) is complete multipartite. Let \(X_i\) be a part of \(G_{X, d}^{r}\) such that \(x_i \in X_i\) holds, \(i = 1\), \(2\), \(3\). By \eqref{e5.21}, we have \(\{x_1, x_2\} \in E(G_{X, d}^{r})\). Hence, \(X_1\) are \(X_2\) are distinct, \(X_1 \neq X_2\). If \(X_1 = X_3\) holds, then from \eqref{e5.21} it follows that
\begin{equation}\label{t5.18:e3}
d(x_2, x_3) \geqslant r = d(x_1, x_2),
\end{equation}
contrary to \eqref{t5.18:e2}. Thus, we have \(X_1 \neq X_3\). Similarly, we obtain \(X_2 \neq X_3\). Hence, \(X_1\), \(X_2\), \(X_3\) are distinct parts of \(G_{X, d}^{r}\). The last statement also implies \eqref{t5.18:e3}, that contradicts \eqref{t5.18:e2}. It is shown that the strong triangle inequality holds for all \(x_1\), \(x_2\), \(x_3 \in X\). The validity of \(\ref{t5.18:s2} \Rightarrow \ref{t5.18:s1}\) follows.
\end{proof}

For the case of totally bounded ultrametric spaces we have the following refinement of Theorem~\ref{t5.18}.

\begin{theorem}\label{t5.9}
Let \((X, d)\) be a metric space with \(|X| \geqslant 2\). Then \((X, d)\) is totally bounded and ultrametric if and only if \(G_{X, d}^{r}\) is complete \(k\)-partite with an integer \(k = k(r)\) for every \(r \in (0, \diam X]\).
\end{theorem}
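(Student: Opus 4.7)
The plan is to reduce the statement to the already-available Proposition~\ref{p2.36} and Theorem~\ref{t5.18} by using the truncation trick from the proof of Theorem~\ref{t5.18}: for each $r \in (0, \diam X]$ set $\rho_r = \psi_r \circ d$ with $\psi_r(t) = \min\{r,t\}$. A short direct check shows that if $d$ satisfies the strong triangle inequality then so does $\rho_r$; moreover $\diam(X,\rho_r) = r$ and, by the identity $G_{X,\rho_r} = G_{X,d}^{r}$ established in that earlier proof, the diametrical graph of $(X,\rho_r)$ is exactly $G_{X,d}^{r}$. Two further ``dictionary'' observations will do the rest: since $\rho_r \leqslant d$ pointwise, every finite $\varepsilon$-net for $(X,d)$ is an $\varepsilon$-net for $(X,\rho_r)$, so total boundedness is transferred from $(X,d)$ to $(X,\rho_r)$; and a direct unpacking of $\min\{r,d(c,x)\} < r$ shows $B_{r}^{\rho_r}(c) = B_{r}^{d}(c)$ for every $c \in X$.

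For the forward direction, assume $(X,d)$ is totally bounded and ultrametric and fix $r \in (0, \diam X]$. By the dictionary, $(X,\rho_r)$ is totally bounded and ultrametric with $|X| \geqslant 2$, and Proposition~\ref{p2.36} furnishes an integer $k = k(r) \geqslant 2$ for which $G_{X,\rho_r}$ is complete $k$-partite. Since $G_{X,\rho_r} = G_{X,d}^{r}$, condition~(ii) follows.

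For the converse, assume $G_{X,d}^{r}$ is complete $k(r)$-partite with an integer $k(r)$ for every $r \in (0, \diam X]$. A complete $k$-partite graph with $k \geqslant 2$ is complete multipartite, so Theorem~\ref{t5.18} immediately yields that $(X,d)$ is ultrametric. To get total boundedness I invoke Proposition~\ref{p2.7} and verify that $\bB_X^{r_1}$ is finite for every $r_1 > 0$. The case $r_1 > \diam X$ is trivial since then $B_{r_1}(c) = X$ for all $c$. For $r_1 \in (0, \diam X]$ apply Theorem~\ref{t2.24} to the ultrametric space $(X,\rho_{r_1})$: its parts are exactly the open balls of radius $r_1$ in $(X,\rho_{r_1})$, and by the ball-identification these coincide with the elements of $\bB_X^{r_1}$; hence $|\bB_X^{r_1}| = k(r_1) < \infty$, and Proposition~\ref{p2.7} delivers total boundedness.

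The only genuine subtlety I expect is bookkeeping around the word \emph{integer}: the hypothesis that $k(r)$ is a finite cardinal (not merely a cardinal witnessing complete multipartiteness) is exactly what converts Proposition~\ref{p2.7}'s finiteness criterion into total boundedness, so one must be careful to carry the finiteness of $k(r)$ through the chain $\text{parts of } G_{X,d}^{r_1} \leftrightarrow \text{balls in } (X,\rho_{r_1}) \leftrightarrow \text{balls in } (X,d)$ rather than taking it for granted. The verification that $\rho_r$ remains an ultrametric and that $B_{r}^{\rho_r}(c) = B_{r}^{d}(c)$ is short but should be written out, since the whole argument pivots on it.
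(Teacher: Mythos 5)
Your proposal is correct and follows essentially the same route as the paper: truncate via \(\rho_r = \psi_r \circ d\), use the identity \(G_{X,\rho_r} = G_{X,d}^{r}\), apply Proposition~\ref{p2.36} for the forward direction and Theorem~\ref{t5.18} for ultrametricity in the converse. The only cosmetic difference is that for total boundedness in the converse you count the balls in \(\bB_X^{r_1}\) and invoke Proposition~\ref{p2.7}, whereas the paper exhibits the finite cover \(X \subseteq \bigcup_i B_{r^*}^{\rho}(x_i) \subseteq \bigcup_i B_r(x_i)\) and checks Definition~\ref{d2.3} directly; both hinge on the same identification of parts with balls from Theorem~\ref{t2.24}.
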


\begin{proof}
Suppose that \((X, d)\) is totally bounded and ultrametric. Let \(r \in (0, \diam X]\) and let \(\psi_r \colon \RR^{+} \to \RR^{+}\) be defined by \eqref{t5.18:e1}
\begin{equation*}
\psi_r(t) = \min\{r, t\}, \quad t \in \RR^{+}.
\end{equation*}
Then 
\begin{equation}\label{t5.9:e5}
\rho_r = \psi_r \circ d.
\end{equation}
is an ultrametric on \(X\). Moreover, \eqref{t5.9:e5} implies that, for every \(c \in X\), we have
\[
\{x \in X \colon d(x, c) < r_0\} = \{x \in X \colon d_{\rho_r}(x, c) < r_0\}
\]
whenever \(0 < r_0 \leqslant r\) holds. Thus, the ultrametric spaces \((X, d)\) and \((X, \rho_r)\) have the same sets of open balls with a radius at most \(r\). Now using Definition~\ref{d2.3}, we see that \((X, \rho_r)\) is a totally bounded ultrametric space. By Proposition~\ref{p2.36}, the diametrical graph \(G_{X, \rho_r}\) is complete \(k\)-partite for an integer \(k = k(r)\). As in the proof of Theorem~\ref{t5.18}, we obtain the equality
\begin{equation}\label{t5.9:e9}
G_{X, \rho_r} = G_{X, d}^{r}.
\end{equation}
Hence, \(G_{X, d}^{r}\) is also \(k\)-partite with the same \(k\).

Suppose now that, for every \(r \in (0, \diam(X, d)]\), \(G_{X, d}^{r}\) is complete \(k\)-partite with an integer \(k = k(r)\). Using Theorem~\ref{t5.18} we obtain that \((X, d)\) is ultrametric. 

Let \(r \in (0, \diam(X, d)]\) be given. Then the space \((X, \rho_r)\) is also ultrametric. Now equality~\eqref{t5.9:e9} and the second part of Theorem~\ref{t2.24} imply that there are points \(x_1\), \(\ldots\), \(x_{k(r)} \in X\) such that 
\begin{equation}\label{t5.9:e6}
X \subseteq \bigcup_{i=1}^{k(r)} B_{r^{*}}^{\rho}(x_i)
\end{equation}
where
\begin{equation}\label{t5.9:e7}
r^{*} = \diam (X, \rho_r) \quad \text{and} \quad B_{r^{*}}^{\rho}(x_i) = \{x \in X \colon \rho_r(x, x_i) < r^{*}\}.
\end{equation}
From \eqref{t5.9:e5} and the first equality in \eqref{t5.9:e7} it follows that
\begin{equation}\label{t5.9:e8}
B_{r^{*}}^{\rho}(x_i) \subseteq B_{r}(x_i) = \{x \in X \colon d(x, x_i) < r\}
\end{equation}
for every \(i \in \{1, \ldots, k(r)\}\). Since \(r\) is an arbitrary point of \((0, \diam (X, d)]\), Definition~\ref{d2.3} and formulas \eqref{t5.9:e6}, \eqref{t5.9:e8} imply the total boundedness of \((X, d)\).
\end{proof}

The following result is similar to Theorem~2.2 from~\cite{BDSa2020} whose proof is based on properties of ultrametric preserving functions. It is interesting to note that the concept of semimetric spaces allows us not to use the ultrametric preserving functions in the proof below.

\begin{theorem}\label{t2.25}
Let \((X, d)\) be an unbounded ultrametric space, let \(d^{*} \in (0, \infty)\) and \(\rho \colon X \times X \to \RR^{+}\) be defined as
\begin{equation}\label{t2.25:e1}
\rho(x, y) = \frac{d^{*} \cdot d(x, y)}{1 + d(x, y)}.
\end{equation}
Then \((X, \rho)\) is a bounded ultrametric space with empty diametrical graph \(G_{X, \rho}\).

Conversely, let \((X, \rho)\) be a bounded ultrametric space with \(|X| \geqslant 2\) and empty \(G_{X, \rho}\). Write \(d^{*} = \diam (X, \rho)\). Then there is an unbounded ultrametric space \((X, d)\) such that~\eqref{t2.25:e1} holds for all \(x\), \(y \in X\).
\end{theorem}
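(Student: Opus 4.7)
The plan is to work with the real functions $\psi\colon [0,\infty) \to [0, d^*)$ defined by $\psi(t) = d^* t/(1+t)$ and $\varphi\colon [0, d^*) \to [0, \infty)$ defined by $\varphi(s) = s/(d^* - s)$, which are mutual inverses. Both are strictly increasing with $\psi(0) = \varphi(0) = 0$, and one has $\psi(t) \to d^*$ as $t \to \infty$ while $\varphi(s) \to \infty$ as $s \to d^*$. The key general fact I will use is that if $\alpha$ is a strictly increasing function with $\alpha(0) = 0$, defined on the range of an ultrametric $\mu$ on a set $Y$, then $\alpha \circ \mu$ is again an ultrametric on $Y$, since monotonicity transports the strong triangle inequality from $\mu$ to $\alpha\circ\mu$. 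This observation reduces the whole proof to checking boundedness/unboundedness and doing a small algebraic verification.

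For the first implication, I set $\rho = \psi \circ d$. Symmetry and positivity are immediate from the corresponding properties of $d$ and from the fact that $\psi(t) = 0$ only at $t = 0$, and the strong triangle inequality follows from the monotonicity remark above, so $\rho$ is an ultrametric. Boundedness is trivial since $\rho(x, y) < d^*$ for all $x, y \in X$. To compute $\diam(X, \rho)$, I use the unboundedness of $(X, d)$ to choose points $x_n, y_n \in X$ with $d(x_n, y_n) \to \infty$; then $\rho(x_n, y_n) \to d^*$, so $\diam(X, \rho) = d^*$. The strict inequality $\psi(t) < d^*$ on $[0, \infty)$ shows this supremum is never attained, so no pair of distinct points defines an edge of $G_{X, \rho}$, which makes the diametrical graph empty by Definition~\ref{d1.3}.

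For the converse, I set $d = \varphi \circ \rho$. The hypothesis that $G_{X, \rho}$ is empty together with $|X| \geqslant 2$ and Definition~\ref{d1.3} forces $\rho(x, y) < d^*$ for all $x, y \in X$, so $\varphi \circ \rho$ is well defined everywhere and is an ultrametric by the monotonicity remark. Because $d^* = \sup \rho$ is not attained, a sequence $(x_n, y_n)$ with $\rho(x_n, y_n) \to d^*$ exists, and then $d(x_n, y_n) = \varphi(\rho(x_n, y_n)) \to \infty$, showing that $(X, d)$ is unbounded. Finally, since $\psi \circ \varphi$ is the identity on $[0, d^*)$, one has $\psi(d(x, y)) = \rho(x, y)$, which is precisely identity~\eqref{t2.25:e1}. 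The only genuinely subtle step I anticipate is the justification that $G_{X, \rho} = \varnothing$ forces $\rho < d^*$ strictly, rather than merely $\rho \leqslant d^*$, so that $\varphi$ may be applied pointwise; once this is spelled out, the remainder of the argument is routine monotonicity and arithmetic.
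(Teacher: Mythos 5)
Your proof is correct and follows essentially the same route as the paper: the same pair of mutually inverse functions \(t \mapsto d^{*}t/(1+t)\) and \(s \mapsto s/(d^{*}-s)\), with ultrametricity transported by monotonicity and the boundedness/unboundedness read off from the limits at \(\infty\) and at \(d^{*}\). The only cosmetic differences are that you verify directly that a strictly increasing function vanishing only at \(0\) preserves the strong triangle inequality, where the paper cites its weak-similarity lemma (Lemma~\ref{l2.6}), and that you derive the strict inequality \(\rho < d^{*}\) straight from Definition~\ref{d1.3}, where the paper invokes Theorem~\ref{t2.24}.
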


\begin{proof}
It is clear that the mapping \(\rho \colon X \times X \to \RR^{+}\), defined by~\eqref{t2.25:e1}, is a semimetric. Let us define a function \(f \colon \RR^{+} \to \RR^{+}\) as
\begin{equation}\label{t2.25:e2}
f(t) = \frac{d^{*} t}{1+t}
\end{equation}
for all \(t \in \RR^{+}\). Since \(f\) is strictly increasing and satisfies the equality \(f(0) = 0\), the identical mapping \(\operatorname{Id} \colon X \to X\) is a weak similarity of \((X, d)\) and \((X, \rho)\). By Lemma~\ref{l2.6}, the semimetric \(\rho \colon X \times X \to \RR^{+}\) is an ultrametric. Now from
\begin{equation*}
\lim_{t \to \infty} f(t) = d^{*},
\end{equation*}
we obtain
\[
\rho(x, y) < \lim_{t \to \infty} f(t) = d^{*} = \diam (X, \rho)
\]
for all \(x\), \(y \in X\). Thus, the diametrical graph \(G_{X, \rho}\) is empty.

Conversely, let \((X, \rho)\) be a bonded ultrametric space with \(|X| \geqslant 2\) and empty diametrical graph \(G_{X, \rho}\). Write \(d^{*} = \diam (X, \rho)\). The inequality \(|X| \geqslant 2\) and boundedness of \((X, \rho)\) imply \(d^{*} \in (0, \infty)\). The function \(g \colon [0, d^{*}) \to \RR^{+}\),
\begin{equation}\label{t2.25:e7}
g(s) = \frac{s}{d^{*} - s},
\end{equation}
is strictly increasing and satisfies the equalities
\begin{equation}\label{t2.25:e5}
g(0) = 0 \quad \text{and} \quad \lim_{\substack{s \to d^{*} \\ s \in [0, d^{*})}} g(s) = +\infty.
\end{equation}
Since \(d^{*}\) equals \(\diam (X, \rho)\), there are sequences \((x_n)_{n \in \mathbb{N}} \subseteq X\) and \((y_n)_{n \in \mathbb{N}} \subseteq X\) such that
\begin{equation}\label{t2.25:e8}
\lim_{n \to \infty} \rho(x_n, y_n) = d^{*}.
\end{equation}
In addition, by Theorem~\ref{t2.24}, we have \(\rho(x, y) < d^{*}\) for all \(x\), \(y \in X\). Consequently, the inclusion \(D(X, \rho) \subseteq [0, d^{*})\) holds. Now Lemma~\ref{l2.6} implies that the mapping \(d \colon X \times X \to \RR^{+}\) satisfying the equality
\[
d(x, y) = g(\rho(x, y))
\]
for all \(x\), \(y \in X\) is an ultrametric on~\(X\). From the second equality in \eqref{t2.25:e5} and equality~\eqref{t2.25:e8} it follows that \((X, d)\) is unbounded. A direct calculation shows the equalities
\begin{equation}\label{t2.25:e6}
f(g(s)) = s \quad \text{and} \quad g(f(t)) = t
\end{equation}
hold for all \(s \in [0, d^{*})\) and \(t \in [0, +\infty)\), where \(f\) is defined by~\eqref{t2.25:e2}. Now equality \eqref{t2.25:e1} follows from \eqref{t2.25:e6}.
\end{proof}

\begin{remark}\label{r2.33}
The condition \(|X| \geqslant 2\) cannot be dropped in the second part of Theorem~\ref{t2.25}. Indeed, if \(|X| = 1\), then, for every metric \(\rho\), the metric space \((X, \rho)\) is bounded and ultrametric with empty diametrical graph \(G_{X, \rho}\) and there are no ultrametrics \(d \colon X \times X \to \RR^{+}\) for which \(\diam (X, d) = +\infty\) holds.
\end{remark}

\begin{lemma}\label{c5.36}
Let \((X, d)\) and \((Y, \rho)\) be nonempty weakly similar ultrametric spaces. Then the diametrical graph \(G_{X, d}\) is empty if and only if the diametrical graph \(G_{Y, \rho}\) is empty.
\end{lemma}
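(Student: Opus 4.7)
The plan is to reduce the claim to the purely order-theoretic statement that the scaling function $\psi$, being a strictly increasing bijection $D(Y)\to D(X)$, preserves the existence of a maximum. First I would dispose of the degenerate case: since $\Phi$ is bijective, $|X|=1$ occurs exactly when $|Y|=1$, and in that case both diametrical graphs are empty by Example~\ref{ex1.5}. So I may henceforth assume $|X|,|Y|\geqslant 2$, which forces $\diam X>0$ and $\diam Y>0$.

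Next I would reformulate the condition ``$G_{X,d}$ is empty'' in terms of the distance set. By Definition~\ref{d1.3}, $G_{X,d}$ is nonempty if and only if there exist distinct $x_{1},x_{2}\in X$ with $d(x_{1},x_{2})=\diam X$. Since $\diam X>0$, the requirement that the points be distinct is automatic, and this is equivalent to $\diam X\in D(X)$, i.e.\ to $D(X)$ possessing a largest element. The same reformulation applies verbatim to $(Y,\rho)$.

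The crux is then to show that $\psi$ transfers this property between $D(X)$ and $D(Y)$. If $s^{*}=\max D(Y)$ exists, then $s\leqslant s^{*}$ for every $s\in D(Y)$, and strict monotonicity of $\psi$ gives $\psi(s)\leqslant \psi(s^{*})$, so $\psi(s^{*})=\max D(X)=\diam X\in D(X)$. Conversely, if $\max D(X)$ exists, the same argument applied to the strictly increasing bijection $\psi^{-1}\colon D(X)\to D(Y)$ produces $\max D(Y)$. Combining these two directions with the reformulation of the previous paragraph yields the stated equivalence.

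I do not expect a serious obstacle: the proof is essentially an unpacking of definitions. The only subtle point is to notice that the ``maximum of $D$'' reformulation covers the unbounded case ($\diam=\infty$) and the bounded-but-unattained case uniformly, since in both situations $\diam$ simply fails to lie in $D$. It is also worth remarking that the ultrametric hypothesis and Lemma~\ref{l2.6} are not actually used in this argument, but they fit the surrounding context of Theorem~\ref{t2.35}.
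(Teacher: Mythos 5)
Your proof is correct and follows essentially the same route as the paper: both reduce the emptiness of the diametrical graph to the (non)existence of a largest element of the distance set and then observe that a strictly increasing bijection between $D(X)$ and $D(Y)$ preserves the existence of a maximum. Your explicit treatment of the case $|X|=1$ (where $D(X)=\{0\}$ does have a largest element yet $G_{X,d}$ is empty) is in fact slightly more careful than the paper's own argument.
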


\begin{proof}
Let \(\Phi \colon X \to Y\) be a weak similarity of \((X, d)\) and \((Y, \rho)\) with the scaling function \(f \colon D(Y) \to D(X)\). Since \(f\) is bijective and strictly increasing, the set \(D(X)\) has the largest element iff \(D(Y)\) contains the largest element.
To complete the proof it suffices to remember that the largest element of the distance set of metric space, if such an element exists, coincides with the diameter of the space.
\end{proof}

Using the concept of weak similarity we can give a more compact variant of Theorem~\ref{t2.25}.

\begin{theorem}\label{t2.35}
Let \((X, d)\) be an ultrametric space with \(|X| \geqslant 2\). Then the following statements are equivalent:
\begin{enumerate}
\item\label{t2.35:s1} \((X, d)\) is weakly similar to an unbounded ultrametric space.
\item\label{t2.35:s2} The diametrical graph \(G_{X, d}\) is empty.
\end{enumerate}
\end{theorem}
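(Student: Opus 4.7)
The plan is to prove the two implications separately, using the already-established Theorem~\ref{t2.25} and Lemma~\ref{c5.36} as the main tools.

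For \ref{t2.35:s1}${}\Rightarrow{}$\ref{t2.35:s2}, I would start from a weak similarity $\Phi\colon (X,d)\to (Y,\rho)$ where $(Y,\rho)$ is unbounded. By Lemma~\ref{l2.6} the space $(Y,\rho)$ is indeed ultrametric (so the hypothesis of Lemma~\ref{c5.36} is satisfied). Because $(Y,\rho)$ is unbounded, Example~\ref{ex1.5} gives $E(G_{Y,\rho})=\varnothing$. Then Lemma~\ref{c5.36} transfers the emptiness of the diametrical graph back to $(X,d)$, yielding $E(G_{X,d})=\varnothing$.

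For the converse \ref{t2.35:s2}${}\Rightarrow{}$\ref{t2.35:s1}, I would split into two cases according to whether $(X,d)$ is bounded. If $(X,d)$ is unbounded, the identity mapping $X\to X$ is trivially a weak similarity onto the unbounded ultrametric space $(X,d)$ itself, so \ref{t2.35:s1} holds. If $(X,d)$ is bounded, then $d^{*}:=\diam(X,d)\in(0,\infty)$ since $|X|\geqslant 2$, and by the converse part of Theorem~\ref{t2.25} there exists an unbounded ultrametric $\tilde d\colon X\times X\to\RR^{+}$ with
\[
d(x,y)=\frac{d^{*}\cdot\tilde d(x,y)}{1+\tilde d(x,y)}
\]
for all $x,y\in X$. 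The function $f(t)=d^{*}t/(1+t)$ is a strictly increasing bijection from $[0,\infty)$ onto $[0,d^{*})$ with $f(0)=0$, so its restriction maps $D(X,\tilde d)$ bijectively and increasingly onto $D(X,d)$; taking the inverse of this restriction as the scaling function, the identity $X\to X$ becomes a weak similarity of $(X,d)$ and the unbounded ultrametric space $(X,\tilde d)$.

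The only subtlety is to make sure the scaling function in Definition~\ref{d2.34} is really a bijection between the correct distance sets, not just a monotone map between $\RR^{+}$ and $[0,d^{*})$; this is immediate once one notes that $f$ bijects $D(X,\tilde d)$ onto $D(X,d)$ by virtue of $d=f\circ\tilde d$. No further machinery is needed, since Theorem~\ref{t2.25} already encapsulates the hard ultrametric-preserving computation.
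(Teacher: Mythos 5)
Your proof is correct and follows essentially the same route as the paper's: Example~\ref{ex1.5} combined with Lemma~\ref{c5.36} for \ref{t2.35:s1}${}\Rightarrow{}$\ref{t2.35:s2}, and the bounded/unbounded case split with Theorem~\ref{t2.25} for the converse. The extra care you take with the scaling function (that \(f(t)=d^{*}t/(1+t)\) restricts to a strictly increasing bijection between the relevant distance sets) is precisely what the paper delegates to the proof of Theorem~\ref{t2.25}, so nothing further is needed.
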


\begin{proof}
\(\ref{t2.35:s1} \Rightarrow \ref{t2.35:s2}\). Let \((X, d)\) be a weakly similar to an unbounded ultrametric space \((Y, \rho)\). Then the diametrical graph \(G_{Y, \rho}\) is empty. Hence, \(G_{X, d}\) is also empty by Lemma~\ref{c5.36}.

\(\ref{t2.35:s2} \Rightarrow \ref{t2.35:s1}\). Suppose that the diametrical graph \(G_{X, d}\) is empty. If \((X, d)\) is unbounded, then \(\ref{t2.35:s2}\) is valid because \((X, d)\) is weakly similar to itself. If \((X, d)\) is bounded, then, by Theorem~\ref{t2.25}, there is an unbounded ultrametric space \((Y, \rho)\) such that
\[
d(x, y) = \diam X \frac{\rho(x, y)}{1 + \rho(x, y)}
\]
for all \(x\), \(y \in X\). It was shown in the proof of Theorem~\ref{t2.25} that \((X, d)\) and \((Y, \rho)\) are weakly similar.
\end{proof}


\begin{thebibliography}{10}

\bibitem{AA2008IJoMaMS}
S.~Al-Addasi and H.~Al-Ezeh.
\newblock Bipartite diametrical graphs of diameter 4 and extreme orders.
\newblock {\em International Journal of Mathematics and Mathematical Sciences},
  2008:11, 2008.
\newblock Article ID 468583.

\bibitem{Bestvina2002}
M.~Bestvina.
\newblock {R-trees in topology, geometry and group theory}.
\newblock In R.~J. Daverman and R.~B. Sher, editors, {\em {Handbook of
  Geometric Topology}}, pages 55--91. Nort-Holland, Amsterdam, 2002.

\bibitem{BS2017}
J.~Beyrer and V.~Schroeder.
\newblock {Trees and ultrametric m\"{o}bius structures}.
\newblock {\em p-adic Numbers Ultrametr. Anal. Appl.}, 9(4):247--256, 2017.

\bibitem{BDSa2020}
V.~Bilet, O.~Dovgoshey, and R.~Shanin.
\newblock Ultrametric preserving functions and weak similarities of ultrametric
  spaces.
\newblock {\em arXiv:2011.07327v1}, 2020.

\bibitem{BriTP2015}
W.~R. Brian.
\newblock Completely ultrametrizable spaces and continuous bijections.
\newblock {\em Topol. Proc.}, 45:233--252, 2015.

\bibitem{BMTA2015}
W.~R. Brian and A.~W. Miller.
\newblock Partitions of $2^{\omega}$ and completely ultrametrizable spaces.
\newblock {\em Topolgy Appl.}, 184:61--71, 2015.

\bibitem{BDHM2007TA}
N.~Brodskiy, J.~Dydak, J.~Higes, and A.~Mitra.
\newblock Dimension zero at all scales.
\newblock {\em Topology Appl.}, 154(14):2729--2740, 2007.

\bibitem{BBI2001}
D.~Burago, Y.~Burago, and S.~Ivanov.
\newblock {\em {A Course in Metric Geometry}}, volume~33 of {\em Graduate
  Studies in Mathematics}.
\newblock Amer. Math. Soc., Providence, RI, 2001.

\bibitem{Carlsson2010}
G.~Carlsson and F.~M\'{e}moli.
\newblock {Characterization, stability and convergence of hierarchical
  clustering methods}.
\newblock {\em J.~Machine Learn. Res.}, 11(3/1):1425--1470, 2010.

\bibitem{CLTaiA2020}
E.~Colebunders and R.~Lowen.
\newblock {Zero dimensionality of the \u{C}ech-Stone compactification of an
  approach space}.
\newblock {\em Topology Appl.}, 273:106973, 2020.

\bibitem{CSJPAA2019}
E.~Colebunders and M.~Sioen.
\newblock {The Banaschewski compactification revisited}.
\newblock {\em J. Pure Appl. Algebra}, 223(12):5185--5214, 2019.

\bibitem{DLPS2008TaiA}
C.~Delhomm\'{e}, C.~Laflamme, M.~Pouzet, and N.~Sauer.
\newblock {Indivisible ultrametric spaces}.
\newblock {\em Topology and its Applications}, 155(14):1462--1478, 2008.

\bibitem{DLW}
E.~D. Demaine, G.~M. Landau, and O.~Weimann.
\newblock {On Cartesian Trees and Range Minimum Queries}.
\newblock In {\em {Proceedings of the 36th International Colloquium, ICALP
  2009, Rhodes, Greece, July 5-12, 2009, Part I}}, volume 5555 of {\em Lecture
  notes in Computer Science}, pages 341--353. Springer-Berlin-Heidelberg, 2009.

\bibitem{Die2005}
R.~Diestel.
\newblock {\em {Graph Theory}}, volume 173 of {\em Graduate Texts in
  Mathematics}.
\newblock Springer, Berlin, third edition, 2005.

\bibitem{DDP2011pNUAA}
D.~Dordovskyi, O.~Dovgoshey, and E.~Petrov.
\newblock Diameter and diametrical pairs of points in ultrametric spaces.
\newblock {\em p-adic Numbers Ultrametr. Anal. Appl.}, 3(4):253--262, 2011.

\bibitem{Dov2019pNUAA}
O.~Dovgoshey.
\newblock {Finite ultrametric balls}.
\newblock {\em p-adic Numbers Ultrametr. Anal. Appl.}, 11(3):177--191, 2019.

\bibitem{Dov2019IEJA}
O.~Dovgoshey.
\newblock {Semigroups generated by partitions}.
\newblock {\em Int. Electron. J. Algebra}, 26:145--190, 2019.

\bibitem{DovBBMSSS2020}
O.~Dovgoshey.
\newblock Combinatorial properties of ultrametrics and generalized
  ultrametrics.
\newblock {\em Bull. Belg. Math. Soc. Simon Stevin}, 27(3):379--417, 2020.

\bibitem{Dov2020TaAoG}
O.~Dovgoshey.
\newblock Isomorphism of trees and isometry of ultrametric spaces.
\newblock {\em Theory and Applications of Graphs}, 7(2), 2020.
\newblock Article 3.

\bibitem{DD2010}
O.~Dovgoshey and D.~Dordovskyi.
\newblock {Ultrametricity and metric betweenness in tangent spaces to metric
  spaces}.
\newblock {\em p-adic Numbers Ultrametr. Anal. Appl.}, 2(2):100--113, 2010.

\bibitem{DKa2021}
O.~Dovgoshey and M.~K\"{u}\c{c}\"{u}kaslan.
\newblock Labeled trees generating complete, compact, and discrete ultrametric
  spaces.
\newblock {\em arXiv:2101.00626v2}, pages 1--23, 2021.

\bibitem{DLAMH2020}
O.~Dovgoshey and J.~Luukkainen.
\newblock Combinatorial characterization of pseudometrics.
\newblock {\em Acta Math. Hungar}, 161(1):257--291, 2020.

\bibitem{DM2008}
O.~Dovgoshey and O.~Martio.
\newblock Blow up of balls and coverings in metric spaces.
\newblock {\em Manuscripta Math.}, 127:89--120, 2008.

\bibitem{DM2009}
O.~Dovgoshey and O.~Martio.
\newblock {Products of metric spaces, covering numbers, packing numbers and
  characterizations of ultrametric spaces}.
\newblock {\em Rev. Roumaine Math. Pures. Appl.}, 54(5-6):423--439, 2009.

\bibitem{DP2013SM}
O.~Dovgoshey and E.~Petrov.
\newblock Subdominant pseudoultrametric on graphs.
\newblock {\em Sb. Math}, 204(8):1131--1151, 2013.

\bibitem{DP2013AMH}
O.~Dovgoshey and E.~Petrov.
\newblock {Weak similarities of metric and semimetric spaces}.
\newblock {\em Acta Math. Hungar}, 141(4):301--319, 2013.

\bibitem{DP2018pNUAA}
O.~Dovgoshey and E.~Petrov.
\newblock {From isomorphic rooted trees to isometric ultrametric spaces}.
\newblock {\em p-adic Numbers Ultrametr. Anal. Appl.}, 10(4):287--298, 2018.

\bibitem{DP2019PNUAA}
O.~Dovgoshey and E.~Petrov.
\newblock {Properties and morphisms of finite ultrametric spaces and their
  representing trees}.
\newblock {\em p-adic Numbers Ultrametr. Anal. Appl.}, 11(1):1--20, 2019.

\bibitem{DP2020pNUAA}
O.~Dovgoshey and E.~Petrov.
\newblock {On some extremal properties of finite ultrametric spaces}.
\newblock {\em p-adic Numbers Ultrametr. Anal. Appl.}, 12(1):1--11, 2020.

\bibitem{DPT2015}
O.~Dovgoshey, E.~Petrov, and H.-M. Teichert.
\newblock {On spaces extremal for the Gomory-Hu inequality}.
\newblock {\em p-adic Numbers Ultrametr. Anal. Appl.}, 7(2):133--142, 2015.

\bibitem{DPT2017FPTA}
O.~Dovgoshey, E.~Petrov, and H.-M. Teichert.
\newblock {How rigid the finite ultrametric spaces can be?}
\newblock {\em Fixed Point Theory Appl.}, 19(2):1083--1102, 2017.

\bibitem{DS2021a}
O.~Dovgoshey and V.~Shcherbak.
\newblock The range of ultrametrics, compactness, and separability.
\newblock {\em arXiv:2102.10901v2}, pages 1--17, 2021.

\bibitem{Fie}
M.~Fiedler.
\newblock {Ultrametric sets in Euclidean point spaces}.
\newblock {\em Electronic Journal of Linear Algebra}, 3:23--30, 1998.

\bibitem{GH1961S}
R.~E. Gomory and T.~C. Hu.
\newblock Multi-terminal network flows.
\newblock {\em SIAM}, 9(4):551--570, 1961.

\bibitem{Gou1993}
F.~Q. Gouv\^{e}a.
\newblock {\em {p-adic Numbers. An Introduction}}.
\newblock Springer-Verlag, Berlin, Heidelberg, 1993.

\bibitem{GFMV2020a}
A.~De Gregorio, U.~Fugacci, F.~Memoli, and F.~Vaccarino.
\newblock On the notion of weak isometry for finite metric spaces.
\newblock {\em arXiv:2005.03109v1}, pages 1--24, 2020.

\bibitem{GroPAMS1956}
J.~de Groot.
\newblock {Non-Archimedean metrics in topology}.
\newblock {\em {Proc. Amer. Math. Soc.}}, 7(5):948--953, 1956.

\bibitem{GroCM1958}
J.~de Groot.
\newblock Some special metrics in general topology.
\newblock {\em Colloq. Math.}, 6:283--286, 1958.

\bibitem{GV2012DAM}
V.~Gurvich and M.~Vyalyi.
\newblock Characterizing (quasi-)ultrametric finite spaces in terms of
  (directed) graphs.
\newblock {\em {Discrete Appl. Math.}}, 160(12):1742--1756, 2012.

\bibitem{HolAMM2001}
J.~E. Holly.
\newblock {Pictures of ultrametric spaces, the p-adic numbers, and valued
  fields}.
\newblock {\em Amer. Math. Monthly}, 108(8):721--728, 2001.

\bibitem{H04}
B.~Hughes.
\newblock {Trees and ultrametric spaces: a categorical equivalence}.
\newblock {\em Adv. Math.}, 189(1):148--191, 2004.

\bibitem{BH2}
B.~Hughes.
\newblock {Trees, ultrametrics, and noncommutative geometry}.
\newblock {\em Pure Appl. Math. Q.}, 8(1):221--312, 2012.

\bibitem{Ibragimov2012}
Z.~Ibragimov.
\newblock {M\"{o}bius maps between ultrametric spaces are local similarities}.
\newblock {\em Ann. Acad. Sci. Fenn. Math.}, 37:309--317, 2012.

\bibitem{KS2012}
W.~A. Kirk and N.~Shahzad.
\newblock {Some fixed point results in ultrametric spaces}.
\newblock {\em Topology Appl.}, 159:3327--3334, 2012.

\bibitem{KSBLMS2012}
J.~K\k{a}kol and W.~\'{S}liwa.
\newblock {Descriptive topology in non-Archimedean function spaces $C_p(X,
  \mathbb{K})$. Part I}.
\newblock {\em Bull. Lond. Math. Soc.}, 44(5):899--912, 2012.

\bibitem{Lemin1984RMS39:5}
A.~J. Lemin.
\newblock {On the stability of the property of a space being isosceles}.
\newblock {\em Russ. Math. Surveys}, 39(5):283--284, 1984.

\bibitem{Lemin1984RMS39:1}
A.~J. Lemin.
\newblock {Proximity on isosceles spaces}.
\newblock {\em Russ. Math. Surveys}, 39(1):169--170, 1984.

\bibitem{Lemin1985SMD32:3}
A.~J. Lemin.
\newblock {Isometric embedding of isosceles (non-Archimedean) spaces in
  Euclidean spaces}.
\newblock {\em Soviet Math. Dokl.}, 32(3):740--744, 1985.

\bibitem{Lemin1988}
A.~J. Lemin.
\newblock {An application of the theory of isosceles (ultrametric) spaces to
  the Trnkova-Vinarek theorem}.
\newblock {\em Comment. Math. Univ. Carolinae}, 29(3):427--434, 1988.

\bibitem{Lemin2003}
A.~J. Lemin.
\newblock {The category of ultrametric spaces is isomorphic to the category of
  complete, atomic, tree-like, real graduated lattices \(\mathbf{LAT}^*\)}.
\newblock {\em Algebra Universalis}, 50(1):35--49, 2003.

\bibitem{Mul1980JoGT}
H.~M. Mulder.
\newblock $n$-{C}ubes and median graphs.
\newblock {\em Journal of Graph Theory}, 4(1):107--110, 1980.

\bibitem{Pet2018pNUAA}
E.~Petrov.
\newblock {Weak similarities of finite ultrametric and semimetric spaces}.
\newblock {\em p-adic Numbers Ultrametr. Anal. Appl.}, 10(2):108--117, 2018.

\bibitem{PD2014JMS}
E.~Petrov and A.~Dovgoshey.
\newblock {On the {G}omory-{H}u inequality}.
\newblock {\em J. Math. Sci.}, 198(4):392--411, 2014.
\newblock Translation from Ukr. Mat. Visn. 10(4):469--496, 2013.

\bibitem{PTAbAppAn2014}
P.~Pongsriiam and I.~Termwuttipong.
\newblock {Remarks on ultrametrics and metric-preserving functions}.
\newblock {\em Abstr. Appl. Anal.}, 2014:1--9, 2014.

\bibitem{Qiu2009pNUAA}
D.~Qiu.
\newblock {Geometry of non-Archimedian Gromov--Hausdorff distance}.
\newblock {\em p-adic Numbers Ultrametr. Anal. Appl.}, 1(4):317--337, 2009.

\bibitem{Qiu2014pNUAA}
D.~Qiu.
\newblock {The structures of Hausdorff metric in non-Archimedian spaces}.
\newblock {\em p-adic Numbers Ultrametr. Anal. Appl.}, 6(1):33--53, 2014.

\bibitem{Sch1985}
W.~H. Schikhof.
\newblock {\em {Ultrametric Calculus. An Introduction to p-Adic Analysis}}.
\newblock Cambridge University Press, 1985.

\bibitem{VauAMM1975}
J.~E. Vaughan.
\newblock Examples of ultrametrics.
\newblock {\em Am. Math. Mon,}, 82:749--752, 1975.

\bibitem{Vau1999TP}
J.~E. Vaughan.
\newblock {Universal ultrametric spaces of smallest weight}.
\newblock {\em Topology Proc.}, 24:611--619, 1999.

\bibitem{VauTP2003}
J.~E. Vaughan.
\newblock Universal ultrametric space of weight ${\tau}^{\omega}$.
\newblock {\em Topol. Proc.}, 27(1):345--349, 2003.

\bibitem{Ves1994UMJ}
I.~Vestfrid.
\newblock On the universal ultrametric space.
\newblock {\em Ukrainin Math. J.}, 46(12):1890--1898, 1994.

\bibitem{WLRL2019DM}
J.~Wang, L.~Lu, M.~Randi\'{c}, and G.~Li.
\newblock Graph energy based on the eccentricity matrix.
\newblock {\em Discrete Mathematics}, 342(9):2636--2646, 2019.

\end{thebibliography}

\end{document}